\newtheorem{theorem}{Theorem}[section]
\newtheorem{corollary}[theorem]{Corollary}
\newtheorem{lemma}[theorem]{Lemma}
\newtheorem{definition}[theorem]{Definition}
\newtheorem{remark}[theorem]{Remark}
\newtheorem{example}[theorem]{Example}
\theoremstyle{definition}
\def\P{\mathcal{P}}
\def\Q{\mathcal{Q}}
\def\T{\mathfrak{T}}
\def\F{\mathcal{F}}
\begin{document}
\setcounter{page}{1}
\title{%Primal fuzzy topologies
Novel fuzzy topologies from old through fuzzy primals
}
\author[1]{Zanyar A. Ameen\thanks{corresponding author: zanyar@uod.ac}}
\author[2]{Ramadhan A. Mohammed}
\author[3]{Tareq M. Al-shami}
\author[4,5]{Baravan A. Asaad}

\affil[1]{\scriptsize Department of Mathematics, College of Science, University of Duhok, Duhok 42001, Iraq}
\affil[2]{Department of Mathematics, College of Basic Education, University of Duhok, Duhok 42001, Iraq}
\affil[3]{Department of Mathematics, Sana'a University, P.O.Box 1247 Sana'a, Yemen}
\affil[4]{Department of Mathematics, Faculty of Science, University of Zakho, Zakho-42002, Iraq}
\affil[5]{Department of Computer Science, College of Science, Cihan University-Duhok, Iraq}
\date{}
\maketitle
\vspace{-1cm}
\begin{abstract}
In this paper, we introduce a novel fuzzy structure named \textquotedblleft fuzzy primal\textquotedblright. We study the essential properties and discuss basic operations on it. A fuzzy operator $(\cdot)^\diamond$ on the family of all fuzzy sets is
introduced here by applying the q-neighborhood structure to a primal fuzzy topological space along with the {\L}ukasiewicz disjunction. We explore the main characterizations of $(\cdot)^\diamond$. Then, we define another fuzzy operator, symbolized by $Cl^\diamond$, with the utilization of $(\cdot)^\diamond$. These fuzzy operators are studied in order to deduce a new fuzzy topology from the original one. Such a new fuzzy topology is called primal fuzzy topology. The fundamental structure, particularly a fuzzy base that generates primal fuzzy topologies, as well as many relationships between different fuzzy primals and fuzzy topologies, are also analyzed. Lastly, the concept of compatibility between fuzzy primals and fuzzy topologies is introduced, and some equivalent conditions related to this are examined. It is shown that if a fuzzy primal is compatible with a fuzzy topology, then the fuzzy base that generates the primal fuzzy topology is itself a fuzzy topology.

\end{abstract}
\textbf{Key words and phrases:}  Fuzzy topology, fuzzy primal, fuzzy grill, fuzzy ideal, fuzzy filter, primal fuzzy topology, fuzzy ideal topology.\\
%\textbf{2020 MSC:} 54A40, 03E72, 54A05, 54C99.

\section{Introduction}\label{sec1}\
By Zadeh \cite{zadeh1996fuzzy}, the fundamental idea of a fuzzy set was first presented as an extension of a classical set. Chang later proposed the idea of fuzzy topology \cite{chang1968fuzzy}. Lowen \cite{lowen1976fuzzy} offered an alternate definition of fuzzy topology. The idea can be viewed as a natural generalization of the general topology. Since then, other authors in this field have examined and conducted various general topological characteristics in fuzzy settings. 

A new weak structure can be created by omitting a particular postulate (or part of a postulate) in fuzzy topology. For instance, by dismissing the finite intersection condition, Abd-El-Monsef and Ramadan \cite{abd1987fuzzy} appeared with the concept of fuzzy supra topology. By disregarding the union condition in fuzzy topology, Ameen et al. \cite{ameen2022novel} established the concept of infra fuzzy topology. A Lowen's version of  infra fuzzy topology was appeared in \cite{das2021some,ameen2023correction} under the name of fuzzy infi-topology. 

Other mathematical fuzzy structure like fuzzy ideals, fuzzy filters, and fuzzy grills are essential in developing fuzzy topology. The concept of a fuzzy ideal was introduced by Sarkar \cite{sarkar1997fuzzy} while studying the notion of ideal fuzzy topology. G{\"a}hler \cite{gahler1995general} used fuzzy filters for producing certain types of fuzzy topologies. The structure of fuzzy grills were defined by Azad \cite{azad1981fuzzy} for the purpose studying fuzzy approximation spaces. Fuzzy grills are generalization of (crisp) grills proposed by Choquet \cite{choquet1947theorie} in 1947. Mukherjee and Das \cite{mukherjee2010fuzzy} constructed a fuzzy topology which can be called "grill fuzzy topology. It is worth remarking that all these fuzzy topologies are finer than the original fuzzy topologies and are normal subclasses of the set of all fuzzy topologies.  

Recently, Acharjee et al. \cite{acharjee2022primal} defined the notion of primals as the dual of grills. Then they used primals to generate a new fuzzy topology called primal topology. Following their direction, Al-shami et al. \cite{primal} introduced soft primal and studied their basic properties. Then soft primals are applied to generate a new soft topology called primal soft topology. This proves to us constructing new topologies over a single universe via some structures is another fruitful area of study. Motivating by the latter statements, we introduce the concept of fuzzy primals and study their fundamental properties. By using the fuzzy primal structure, we build a new fuzzy topology from an old one and name it the primal fuzzy topology.

Here is the remaining content of the paper: Afterward the introduction, we review the definitions and outcomes needed to truly comprehend the material provided in this paper. After that, in Section 3, we establish the concept of fuzzy primals and demonstrate how fuzzy primals and fuzzy grills are related concepts. The fundamental operations on fuzzy primals are investigated. In Section 4, we propose the definition of a primal fuzzy topological space followed by a fuzzy topological operator $(\cdot)^\diamond$.  Then, we study the main properties of $(\cdot)^\diamond$. Furthermore, we define another fuzzy operator called $Cl^\diamond$ with the aid of $(\cdot)^\diamond$ and show that $Cl^\diamond$ is the Kuratowski fuzzy closure operator. It means that $Cl^\diamond$ generates a fuzzy topology, which we call a primal fuzzy topology. The central properties of primal fuzzy topologies are studied. In Section 5, we introduce the concept of compatibility of a fuzzy primal with a fuzzy topology. Some equivalences of the this concept are investigated. In Section 6, we conclude by summarizing the major contributions and making some recommendations for the future.

\section{Preliminaries}\
Let $Y$ be a universe (domain), $\mathbb{I}$ be the unit interval $[0,1]$, and $\mathbb{I}^Y$ be the class of all fuzzy sets in $Y$. All undefined terminologies used in the manuscript can be found in \cite{chang1968fuzzy,pao1980fuzzy,zadeh1996fuzzy}.

\begin{definition}\cite{zadeh1996fuzzy}
A mapping $\mu$ from $Y$ to $\mathbb{I}$ is called a fuzzy set in $Y$. The value $\mu(y)$ is called the degree of the membership of $y$ in $\mu$ for each $y\in Y$. The support of $\mu$ is the set $\{y\in Y:\mu(y)>0\}$. The complement of $\mu$ is, denoted by $\bar{\mu}$ (or $1_Y-\mu$ if there is no confusion), given by $\bar{\mu}(y)=1-\mu(y)$ for all $y\in Y$. 
\end{definition}

\begin{definition}\cite{zadeh1996fuzzy}
Let $\mu, \nu\in\mathbb{I}^Y$. Then $\mu\subseteq\nu$ if $\mu(y)\leq\nu(y)$ for all $y\in Y$.
\end{definition}

\begin{definition}\cite{zadeh1996fuzzy}
Let $\{\mu_s:s\in S\}\subseteq \mathbb{I}^Y$, where $S$ is any index set. Then
\begin{enumerate}[(i)]
	\item $\bigcup\mu_s(y)=\sup\{\mu_s(y):s\in S\}$, for each $y\in Y$.
	\item $\bigcap\mu_s(y)=\inf\{\mu_s(y):s\in S\}$, for each $y\in Y$.
\end{enumerate}
\end{definition}

\begin{definition}\cite{lukasiewicz1970select}
Let $\mu,\nu\in\mathbb{I}^Y$. Then $(\mu+\nu)$ and $(\mu-\nu)$ are defined by
\[
(\mu+\nu)(y)=\begin{cases}
\mu(y)+\nu(y), &\text{ if } \mu(y)+\nu(y)\leq 1;\vspace{.3cm}\\
1, &\text{ if } \mu(y)+\nu(y)< 1,
\end{cases}
\]
and 
\[
(\mu-\nu)(y)=\begin{cases}
\mu(y)-\nu(y), &\text{ if } \mu(y)>\nu(y);\vspace{.3cm}\\
0, &\text{ if } \mu(y)\leq\nu(y),
\end{cases}
\]
for each $y\in Y$.
\end{definition}

\begin{definition}\cite{lukasiewicz1970select}
Let $\mu,\nu\in\mathbb{I}^Y$. Then,  for each $y\in Y$,
\begin{enumerate}[(i)]
\item $\mu\odot\nu=max(\mu(y)+\nu(y)-1,0)$. That is,
\[
(\mu\odot\nu)(y)=\begin{cases}
	\mu(y)+\nu(y)-1, &\text{ if } \mu(y)+\nu(y)>1;\vspace{.3cm}\\
	0, &\text{ if } \mu(y)+\nu(y)\leq 1,
\end{cases}
\]
\item $\mu\oplus\nu=min(\mu(y)+\nu(y),1)$.
 That is,
\[
(\mu\oplus\nu)(y)=\begin{cases}
	\mu(y)+\nu(y), &\text{ if } \mu(y)+\nu(y)<1;\vspace{.3cm}\\
	1, &\text{ if } \mu(y)+\nu(y)\geq 1.
\end{cases}
\]
\end{enumerate}
\end{definition}

Here, we give the following remark that will be used in the sequel: Its proof can be easily checked from the above definitions.
\begin{remark}\label{op-remark}
	Let $\mu,\nu,\eta,\theta\in\mathbb{I}^Y$. Then
	\begin{enumerate}[(i)]
		\item If $\mu\subseteq\nu$, then $\mu\oplus\eta\subseteq\nu\oplus\eta$.
		\item $(\mu\oplus\eta)\cap(\nu\oplus\eta)\subseteq(\mu\cap\nu)\oplus\eta$.
		\item $\overline{\mu\oplus\nu}=\bar{\mu}\oplus\bar{\nu}$.
	\end{enumerate}
\end{remark}

\begin{definition}\cite{pao1980fuzzy}
A fuzzy set $\mu\in\mathbb{I}^Y$ is said to be a fuzzy point, denoted by $y_t$, with the support $y\in Y$ and the value $t\in(0,1]$ if $y_t:Y\to\mathbb{I}$ is the mapping defined as follows: for each $z\in Y$,
	\[y_t(z)=
	\begin{cases}
		t  &\text{ if }z=y;\vspace{.3cm}\\
		0  &\text{ otherwise}.
	\end{cases}
	\]
	
A fuzzy point $y_t$ said to be in $\mu$, denoted by $y_t\in\mu$, if $t\leq\mu(y)$. We write $\F_t(Y)$ for the set of all fuzzy points in $Y$.
\end{definition}

\begin{definition}\cite{pao1980fuzzy}
Let $Y\ne\emptyset$, and let $\mu,\nu\in \mathbb{I}^Y$. Then $\mu,\nu$ intersect if there is $y\in Y$ such that $\mu(y)\wedge\nu(y)\neq 0$.
\end{definition}

\begin{definition}\cite{pao1980fuzzy}
Let $y_t\in \F_t(Y)$ such that $Y\ne\emptyset$ and let $\mu,\nu\in \mathbb{I}^Y$. Then
\begin{enumerate}[(i)]
		\item $y_t$ is called quasi-coincident with $\mu$ if $t+\mu(y)>1$ and is denoted $y_t\prec\mu$.
		\item  $y_t$ is called not quasi-coincident with $\mu$ if $t+\mu(y)\leq1$ and is denoted $y_t\nprec\mu$.
		\item  $\mu$ is called quasi-coincident with $\nu$ (at $y$) if there exists $y\in Y$ such that $\mu(y)+\nu(y)>1$ and is denoted $\mu\prec\nu$.
	\end{enumerate}
\end{definition}

\begin{lemma}\cite[Proposition 2.1]{pao1980fuzzy}\label{quasi}
Let $y_t\in \F_t(Y)$ such that $Y\ne\emptyset$ and $\mu,\nu\in \mathbb{I}^Y$. Then
\begin{enumerate}[(i)]
	\item if $\mu\prec\nu$ at $y$, then $\mu(y)\wedge\nu(y)\neq 0$.
	\item $y_t\in\mu$ iff $y_t\nprec 1-\mu$.
	\item $\mu\leq\nu$ iff $\mu\nprec 1-\nu$.
\end{enumerate}
\end{lemma}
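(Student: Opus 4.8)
The plan is to prove each of the three parts by directly unwinding the relevant definitions; nothing beyond elementary arithmetic in the unit interval $\mathbb{I}=[0,1]$ is required, and the three items are essentially independent.

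For part (i), I would start from the hypothesis $\mu\prec\nu$ at $y$, which by the definition of quasi-coincidence of two fuzzy sets supplies a point $y\in Y$ with $\mu(y)+\nu(y)>1$. The key observation is that since both $\mu(y)$ and $\nu(y)$ lie in $[0,1]$, neither value can vanish: if, say, $\mu(y)=0$, then $\mu(y)+\nu(y)=\nu(y)\leq 1$, contradicting the strict inequality. Hence $\mu(y)>0$ and $\nu(y)>0$, so $\mu(y)\wedge\nu(y)=\min\{\mu(y),\nu(y)\}>0$, that is $\mu(y)\wedge\nu(y)\neq 0$.

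For part (ii), I would compute the condition $y_t\nprec 1-\mu$ directly. By the definition of non-quasi-coincidence applied to the fuzzy point $y_t$ and the fuzzy set $1-\mu$, this reads $t+(1-\mu)(y)\leq 1$; since $(1-\mu)(y)=1-\mu(y)$, it simplifies to $t+1-\mu(y)\leq 1$, i.e. $t\leq\mu(y)$, which is precisely the definition of $y_t\in\mu$. Because every step is a genuine equivalence, both directions of the ``iff'' follow simultaneously, and no separate converse argument is needed.

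Part (iii) is handled the same way, the only extra care being with the quantifier hidden in quasi-coincidence of two fuzzy sets. By definition $\mu\prec 1-\nu$ means there exists $y\in Y$ with $\mu(y)+(1-\nu)(y)>1$; negating this, $\mu\nprec 1-\nu$ means that for every $y\in Y$ we have $\mu(y)+1-\nu(y)\leq 1$, equivalently $\mu(y)\leq\nu(y)$ for all $y$, which is exactly $\mu\leq\nu$. I expect the only point demanding attention is this conversion of the existential quantifier in the definition of $\prec$ into a universal one upon negation; everything else is routine manipulation of inequalities in $[0,1]$.
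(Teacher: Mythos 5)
Your proof is correct in all three parts: the paper states this lemma purely as a citation to \cite[Proposition 2.1]{pao1980fuzzy} and supplies no proof of its own, so there is nothing internal to compare against, and your direct unwinding of the definitions is exactly the standard argument one finds in the cited source. In particular, you correctly identify the one genuine subtlety, namely that quasi-coincidence of two fuzzy sets is existential so its negation in part (iii) becomes a pointwise universal inequality, and your observation in part (ii) that every step is an equivalence legitimately yields both directions at once.
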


\begin{definition}\label{top}\cite{chang1968fuzzy}
A subcollection $\T$ of $\mathbb{I}^Y$ is said to be a fuzzy topology on $Y$ if
\begin{enumerate}[(i)]
		\item $0_Y,1_Y\in \T$.
		\item  $\mu_1\cap\mu_2\in \T$ whenever $\mu_1,\mu_2\in \T$.
		\item  $\bigcup\mu_s(y)\in\T$ whenever $\{\mu_s(y):s\in S\}\subseteq\T$.
\end{enumerate}
The pair $(Y,\T)$ is called a fuzzy topological space. Members of $\T$ are called fuzzy $\T$-open (or simply open) subsets of $Y$ and their complements are called fuzzy $\T$-closed (or simply closed). The family of all fuzzy closed set is denoted by $\T^c$.
\end{definition}

%\begin{definition}\cite{azad1981fuzzysemicont}The interior and closure of a fuzzy set $\lambda$ in a fuzzy topological space $(Y,\T)$ is defined by$$Int(\lambda)=\cup\{\mu:\mu\subseteq\lambda, \mu\in\T\},$$ and$$Cl(\lambda)=\cap\{\mu:\lambda\subseteq\mu, \mu\in\T^c\}.$$\end{definition}

\begin{definition}\cite{pao1980fuzzy}
Let $(Y,\T)$ be a fuzzy topological space and let $y_t\in\F_t(Y)$. A fuzzy subset $\eta$ of $(Y,\T)$ is called a fuzzy neighborhood of $y_t$ if there exists $\mu\in\T$ such that $y_t\in\mu\subseteq\eta$. The family of all fuzzy neighborhoods of $y_t$ is denoted by $\mathcal{N}_\T(y_t)$ or simply $\mathcal{N}(y_t)$.
\end{definition}

\begin{definition}\cite{pao1980fuzzy}
Let $(Y,\T)$ be a fuzzy topological space and let $y_t\in\F_t(Y)$. A fuzzy subset $\eta$ of $(Y,\T)$ is called a fuzzy q-neighborhood of $y_t$ if there exists $\mu\in\T$ such that $y_t\prec\mu\subseteq\eta$. The family of all fuzzy q-neighborhoods of $y_t$ is denoted by $\mathcal{Q}^*(y_t)$. If $\eta\in\T$, then it is called an open q-neighborhood of $y_t$. The family of all fuzzy open q-neighborhoods of $y_t$ in $\T$ is denoted by $\mathcal{Q}_\T(y_t)$ or simply $\mathcal{Q}(y_t)$.
\end{definition}

\begin{definition}\cite{pao1980fuzzy}\label{fuzzybase}
	Let $(Y,\T)$ be a fuzzy topological space. A subfamily $\mathcal{B}$ of $\T$ is called a fuzzy base for $\T$ if for each $y_t\in\F_t(Y)$ and each $\mu\in\Q(y_t)$, there exists $\beta\in\mathcal{B}$ such that $y_t\prec\beta\subseteq\mu$.
\end{definition}

\begin{definition}\cite{pao1980fuzzy}
Let $(Y,\T)$ be a fuzzy topological space and let $\lambda\in\mathbb{I}^Y$. A fuzzy point $y_t\in\F_t(Y)$ is called a fuzzy interior point of $\lambda$ if $y_t$ has a fuzzy neighborhood included in $\lambda$. The set of all fuzzy interior points of $\lambda$ is symbolized by $Int(\lambda)$
\end{definition}

\begin{definition}\cite{pao1980fuzzy}
Let $(Y,\T)$ be a fuzzy topological space and let $\lambda\in\mathbb{I}^Y$. A fuzzy point $y_t\in\F_t(Y)$ is called a fuzzy closure point of $\lambda$ if each fuzzy q-neighborhood of $y_t$ is quasi-coincident with $\lambda$.  The set of all fuzzy closure points of $\lambda$ is symbolized by $Cl(\lambda)$
\end{definition}

\begin{definition}\cite{lowen1976fuzzy}\label{Kuratowski}
	A fuzzy operator $c:\mathbb{I}^Y\to\mathbb{I}^Y$ is called a fuzzy (Kuratowski) closure operator if it satisfies the following axioms for each $\mu,\nu\in\mathbb{I}^Y$:
	\begin{enumerate}[(i)]
		\item $c(0_Y)=0_Y$.
		\item $\mu\subseteq c(\mu)$.
		\item $c(\mu\cup\nu)=c(\mu)\cup c(\nu)$.
		\item $c(c(\mu))=c(\mu)$.
	\end{enumerate}
\end{definition}

\begin{definition}\label{grilldefinition}\cite{azad1981fuzzy}
	A subfamily $\mathcal{G}$ of $\mathbb{I}^Y$ is said to be a fuzzy grill on $Y$ if it satisfies the following postulates:
	\begin{enumerate}[(i)]
		\item $0_Y\notin\mathcal{G}$.
		\item  If $\mu\in \mathcal{G}$, $\nu\in\mathbb{I}^Y$, and $\mu\subseteq\nu$, then $\nu\in \mathcal{G}$. 
		\item  If $\mu,\nu\in\mathbb{I}^Y$ such that $\mu\cup \nu\in \mathcal{G}$, then $\mu\in \mathcal{G}$ or $\nu\in \mathcal{G}$.
	\end{enumerate}
\end{definition}

%============================== New section ======================
\section{Fuzzy primal}\
\begin{definition}\label{primaldefinition}
	A subfamily $\mathcal{P}$ of $\mathbb{I}^Y$ is said to be a fuzzy primal on $Y$ if it satisfies the following postulates:
	\begin{enumerate}[(i)]
		\item $1_Y\notin\mathcal{P}$.
		\item If $\mu\in \mathcal{P}$, $\nu\in\mathbb{I}^Y$, and $\nu\subseteq\mu$, then $\nu\in \mathcal{P}$. 
		\item If $\mu,\nu\in\mathbb{I}^Y$ such that $\mu\cap \nu\in \mathcal{P}$, then $\mu\in \mathcal{P}$ or $\nu\in \mathcal{P}$.
	\end{enumerate}
\end{definition}

The next result is easy to prove.
\begin{theorem}\label{pp1}
	A subfamily $\mathcal{P}$ of $\mathbb{I}^Y\ $ is a fuzzy primal on $Y$ iff the following conditions are satisfied.
	\begin{enumerate}[(i)]
		\item $1_Y\not\in\mathcal{P}$.
		\item If $\mu\not\in\mathcal{P}$ and $\mu\subseteq\nu$, then $\nu\not\in\mathcal{P}$.
		\item If $\mu\not\in \mathcal{P}$ and $\nu\not\in\mathcal{P}$, then $\mu\cap \nu\not\in\mathcal{P}$.
	\end{enumerate}
\end{theorem}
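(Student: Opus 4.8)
The plan is to prove the equivalence by showing that each condition in Theorem~\ref{pp1} is the logical contrapositive of the corresponding condition in Definition~\ref{primaldefinition}. Since condition~(i) is literally identical in both statements, no work is needed there. The content lies entirely in recognizing that (ii) and (iii) of the theorem are contrapositive restatements of (ii) and (iii) of the definition, respectively.

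For the downward-closure condition, I would first assume the fuzzy primal axiom Definition~\ref{primaldefinition}(ii) and derive Theorem~\ref{pp1}(ii). The definitional axiom reads: if $\mu\in\mathcal{P}$ and $\nu\subseteq\mu$, then $\nu\in\mathcal{P}$. Its contrapositive is: if $\nu\notin\mathcal{P}$ and $\nu\subseteq\mu$, then $\mu\notin\mathcal{P}$. Relabeling the variables (swapping the roles of $\mu$ and $\nu$) yields exactly Theorem~\ref{pp1}(ii): if $\mu\notin\mathcal{P}$ and $\mu\subseteq\nu$, then $\nu\notin\mathcal{P}$. The reverse implication follows by taking the contrapositive again, so the two conditions are equivalent.

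For the third condition, I would take the contrapositive of Definition~\ref{primaldefinition}(iii). The axiom states: if $\mu\cap\nu\in\mathcal{P}$, then $\mu\in\mathcal{P}$ or $\nu\in\mathcal{P}$. Negating both sides, the contrapositive is: if $\mu\notin\mathcal{P}$ and $\nu\notin\mathcal{P}$ (the negation of the disjunction), then $\mu\cap\nu\notin\mathcal{P}$ (the negation of the membership). This is precisely Theorem~\ref{pp1}(iii), and once more the equivalence is symmetric because taking the contrapositive is an involution.

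Honestly, there is no genuine obstacle here; this is a purely formal manipulation, which is why the authors flag it as \textquotedblleft easy to prove.\textquotedblright{} The only point demanding the slightest care is the handling of the disjunction in condition~(iii): one must apply De~Morgan's law correctly so that the negation of \emph{$\mu\in\mathcal{P}$ or $\nu\in\mathcal{P}$} becomes \emph{$\mu\notin\mathcal{P}$ and $\nu\notin\mathcal{P}$}. Keeping the quantifier-free propositional structure straight ensures the contrapositives line up exactly with the stated conditions, and assembling the three equivalences completes the proof in both directions.
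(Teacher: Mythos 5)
Your proof is correct and is exactly the argument the paper intends: the paper omits the proof, remarking only that the result is \textquotedblleft easy to prove,\textquotedblright{} and the contrapositive-plus-De~Morgan manipulation you give (with the variable relabeling in condition (ii)) is the canonical way to establish the equivalence with Definition~\ref{primaldefinition}.
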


\begin{example}\label{examp}
The following families are fuzzy primals:
\begin{enumerate}[(i)]
	\item $\P_1=\mathbb{I}^Y-\{1_Y\}$ (Trivial fuzzy primal).
	\item $\P_2=\{\lambda\in\mathbb{I}^Y:\lambda\leq\mu\}$, where 
	\[
	\mu(x)=\begin{cases}
		4x, &\text{ if } 0\leq x\leq 1/4;\vspace{.3cm}\\
		1, &\text{ if } 1/4\leq x\leq 1.
	\end{cases}
	\]
\item  $\P_3=\{\lambda\in\mathbb{I}^Y, y_{t_0}\notin\lambda\}$, where $y_{t_0}\in\F_t(Y)$.
\item  $\P_4=\{\lambda\in\mathbb{I}^Y, \lambda\text{ has a range which is subset of even numbers}\}$.
\end{enumerate}
\end{example}
It is worth noting that the families $\mathbb{I}^Y$ and $\{0_Y\}$ are not fuzzy primals. The first is clear as $1_Y\in\mathbb{I}^Y$. For the latter, we have $\mu\cap\nu=0_Y\in\{0_Y\}$, but neither $\mu$ nor $\nu$ belongs to $\{0_Y\}$, where 
	\[
\mu(x)=\begin{cases}
	0, &\text{ if } 0\leq x\leq 1/2;\vspace{.3cm}\\
	2x-1, &\text{ if } 1/2\leq x\leq 1,
\end{cases}
\]
and
	\[
\nu(x)=\begin{cases}
1-2x, &\text{ if } 0\leq x\leq 1/2;\vspace{.3cm}\\
0, &\text{ if } 1/2\leq x\leq 1.
\end{cases}
\]

\begin{theorem}
	If $\mathcal{G}$ is a fuzzy grill on $Y$, then the family $\mathcal{P}=\{\nu: \bar{\nu}\in \mathcal{G}\}$ is a fuzzy primal on $Y$.
\end{theorem}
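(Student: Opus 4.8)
The plan is to exploit the fact that the primal axioms are precisely the grill axioms read through complementation, which is an order-reversing involution on $\mathbb{I}^Y$ that interchanges unions with intersections and the top set with the bottom set. Concretely, I would record at the outset the three elementary facts I intend to use repeatedly: $\overline{1_Y}=0_Y$; complementation reverses inclusion, so $\nu\subseteq\mu$ iff $\bar{\mu}\subseteq\bar{\nu}$; and the De Morgan identity $\overline{\mu\cap\nu}=\bar{\mu}\cup\bar{\nu}$ (all immediate from $\bar{\lambda}(y)=1-\lambda(y)$ together with $\min$/$\max$ duality). With these in hand, verifying the three defining conditions of Definition~\ref{primaldefinition} for $\mathcal{P}=\{\nu:\bar{\nu}\in\mathcal{G}\}$ becomes a direct translation into the corresponding conditions for the grill $\mathcal{G}$.

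For axiom (i), I would note that $1_Y\in\mathcal{P}$ would mean $\overline{1_Y}=0_Y\in\mathcal{G}$, contradicting grill axiom (i); hence $1_Y\notin\mathcal{P}$. For axiom (ii), given $\mu\in\mathcal{P}$ and $\nu\subseteq\mu$, I have $\bar{\mu}\in\mathcal{G}$, and the inclusion reversal gives $\bar{\mu}\subseteq\bar{\nu}$; applying the upward closure of the grill (axiom (ii) for $\mathcal{G}$) then yields $\bar{\nu}\in\mathcal{G}$, i.e.\ $\nu\in\mathcal{P}$.

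The remaining condition is the only one with any content, and it is where De Morgan does the work. Assuming $\mu\cap\nu\in\mathcal{P}$, I rewrite $\overline{\mu\cap\nu}=\bar{\mu}\cup\bar{\nu}\in\mathcal{G}$, and then grill axiom (iii) forces $\bar{\mu}\in\mathcal{G}$ or $\bar{\nu}\in\mathcal{G}$, which is exactly $\mu\in\mathcal{P}$ or $\nu\in\mathcal{P}$.

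I do not anticipate a genuine obstacle: the entire argument is a mechanical dualization, and the only point requiring a moment's care is making sure the correct De Morgan law ($\overline{\mu\cap\nu}=\bar{\mu}\cup\bar{\nu}$, not its $\oplus$-analogue from Remark~\ref{op-remark}) is invoked, since the primal and grill operations use the ordinary $\cap$ and $\cup$ rather than the {\L}ukasiewicz connectives.
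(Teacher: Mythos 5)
Your proof is correct and matches the paper's argument essentially step for step: both verify the three primal axioms by direct dualization through complementation, using $\overline{1_Y}=0_Y$, inclusion reversal for axiom (ii), and the De Morgan identity $\overline{\mu\cap\nu}=\bar{\mu}\cup\bar{\nu}$ for axiom (iii). Your explicit upfront remark distinguishing the ordinary $\cap,\cup$ De Morgan law from the {\L}ukasiewicz $\oplus$-analogue is a nice precaution but does not change the substance of the argument.
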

\begin{proof}
	First, it is obvious that $0_Y\not\in \mathcal{G}$, so $1_Y\not\in \mathcal{P}$. Second, let $\nu\in \mathcal{P}$ and take any fuzzy subset $\mu$ of $\nu$. By the way of constructing $\mathcal{P}$ we have $\bar{\nu}\in \mathcal{G}$. Since $\bar{\nu}\subseteq\bar{\mu}$, it follows from the definition of fuzzy grill that $\bar{\mu}\in \mathcal{G}$. This automatically means that $\mu\in \mathcal{P}$. Third, let $\mu, \nu$ be fuzzy subsets such that $\mu\cap\nu\in \mathcal{P}$. Then, $\bar{\mu}\cup\bar{\nu}\in \mathcal{G}$. Therefore, $\bar{\mu}\in\mathcal{G}$ or $\bar{\nu}\in\mathcal{G}$. Thus, $\mu\in\mathcal{P}$ or $\nu\in\mathcal{P}$. Hence, we get the desired result.
\end{proof}

\begin{corollary}
	If $\mathcal{P}$ is a fuzzy primal on $Y$, then the family $\mathcal{G}=\{\nu: \bar{\nu}\in \mathcal{P}\}$ is a fuzzy grill on $Y$.
\end{corollary}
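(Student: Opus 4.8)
The plan is to check the three defining postulates of a fuzzy grill for $\mathcal{G}=\{\nu:\bar{\nu}\in\mathcal{P}\}$ one at a time, in each case translating the condition on $\mathcal{G}$ into the corresponding primal condition on $\mathcal{P}$ through complementation. The whole verification rests on three elementary properties of the complement in $\mathbb{I}^Y$: that $\overline{0_Y}=1_Y$, that complementation reverses inclusion (so $\mu\subseteq\nu$ gives $\bar{\nu}\subseteq\bar{\mu}$), and the De Morgan law $\overline{\mu\cup\nu}=\bar{\mu}\cap\bar{\nu}$. This mirrors exactly the dual argument used in the theorem just proved.

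First, for postulate (i) I would note that $0_Y\in\mathcal{G}$ would force $\overline{0_Y}=1_Y\in\mathcal{P}$, contradicting postulate (i) of the primal; hence $0_Y\notin\mathcal{G}$. Next, for the upward-closure postulate (ii), given $\mu\in\mathcal{G}$ with $\mu\subseteq\nu$, I have $\bar{\mu}\in\mathcal{P}$ and $\bar{\nu}\subseteq\bar{\mu}$, so the downward-closure postulate (ii) of $\mathcal{P}$ delivers $\bar{\nu}\in\mathcal{P}$, that is $\nu\in\mathcal{G}$. Finally, for postulate (iii), if $\mu\cup\nu\in\mathcal{G}$ then $\bar{\mu}\cap\bar{\nu}=\overline{\mu\cup\nu}\in\mathcal{P}$, whence postulate (iii) of the primal yields $\bar{\mu}\in\mathcal{P}$ or $\bar{\nu}\in\mathcal{P}$, i.e. $\mu\in\mathcal{G}$ or $\nu\in\mathcal{G}$.

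I do not expect any genuine obstacle: the statement is a mechanical dualization of the preceding theorem, and the only places requiring a moment's care are the reversal of inclusion in postulate (ii) and the correct use of De Morgan in postulate (iii). It is worth recording the structural reason behind this: since $\overline{\bar{\lambda}}=\lambda$, the assignment $\mathcal{A}\mapsto\{\nu:\bar{\nu}\in\mathcal{A}\}$ is an involution on subfamilies of $\mathbb{I}^Y$, and each of the three steps above is in fact an equivalence rather than a one-way implication. Thus the theorem and this corollary together show that this involution restricts to mutually inverse bijections between the fuzzy grills and the fuzzy primals on $Y$, the corollary being precisely the converse implication made explicit.
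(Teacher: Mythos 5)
Your proof is correct and matches the paper's approach: the paper leaves this corollary unproved precisely because it is the mechanical dual of the preceding theorem, whose proof proceeds by the same three complementation steps you carry out (with $\overline{0_Y}=1_Y$, inclusion reversal, and De Morgan). Your closing observation that the map $\mathcal{A}\mapsto\{\nu:\bar{\nu}\in\mathcal{A}\}$ is an involution giving a bijection between fuzzy grills and fuzzy primals is a nice extra, but the core argument is the same as the paper's.
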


\begin{theorem}
	If $\mathcal{P}_1$ and $\mathcal{P}_2$ are two fuzzy primals on $Y$, then $\mathcal{P}_1\cup\mathcal{P}_2$ is a fuzzy primal on $Y$.
\end{theorem}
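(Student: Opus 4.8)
The plan is to verify directly the three defining postulates of a fuzzy primal from Definition \ref{primaldefinition} for the family $\mathcal{P}=\mathcal{P}_1\cup\mathcal{P}_2$, drawing in each case on the corresponding property already known to hold for $\mathcal{P}_1$ and $\mathcal{P}_2$ separately. The guiding observation is that membership in a union means membership in at least one of the two families; this meshes naturally with the existential flavor of postulates (ii) and (iii), while postulate (i) is a non-membership condition that is preserved precisely because it is satisfied by both families. Equivalently, one could argue through the characterization in Theorem \ref{pp1}, but the direct route seems cleanest.

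First I would dispose of postulate (i): since $1_Y\notin\mathcal{P}_1$ and $1_Y\notin\mathcal{P}_2$ by hypothesis, $1_Y$ lies in neither family, and hence $1_Y\notin\mathcal{P}_1\cup\mathcal{P}_2=\mathcal{P}$. For the downward-closure postulate (ii), I would take $\mu\in\mathcal{P}$ together with $\nu\in\mathbb{I}^Y$ satisfying $\nu\subseteq\mu$. Membership in the union splits into the two cases $\mu\in\mathcal{P}_1$ or $\mu\in\mathcal{P}_2$; in whichever case holds, the primal property of that single family forces $\nu$ into the same family, and therefore into $\mathcal{P}$. The two cases are symmetric, so I would carry out one and remark that the other is identical.

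Finally, for postulate (iii), I would assume $\mu,\nu\in\mathbb{I}^Y$ with $\mu\cap\nu\in\mathcal{P}$ and again split according to which of $\mathcal{P}_1,\mathcal{P}_2$ contains $\mu\cap\nu$. Applying the third primal axiom of that particular family yields $\mu\in\mathcal{P}_i$ or $\nu\in\mathcal{P}_i$ for the relevant index $i$, and thus $\mu\in\mathcal{P}$ or $\nu\in\mathcal{P}$ as required. I do not anticipate a genuine obstacle, since every step reduces to a known property of one of the two given primals; the only point requiring care is bookkeeping in the case analysis of (ii) and (iii), namely that each invocation of a primal axiom must be made for the single family that is known to contain the set in question rather than for the union itself, because the union is not assumed a priori to satisfy the axioms being used.
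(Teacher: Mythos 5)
Your proposal is correct and takes essentially the same approach as the paper's own proof: both verify the three postulates of Definition \ref{primaldefinition} directly for $\mathcal{P}_1\cup\mathcal{P}_2$, disposing of (i) via non-membership of $1_Y$ in both families and handling (ii) and (iii) by a case split on which of $\mathcal{P}_1$, $\mathcal{P}_2$ contains the given fuzzy set. Your closing remark --- that each axiom must be applied to the single family known to contain the set, not to the union itself --- is exactly the bookkeeping implicit in the paper's argument, so there is nothing to add.
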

\begin{proof}
	First, let $\mathcal{P}_1$ and $\mathcal{P}_2$ be two fuzzy primals on $Y$. Then $1_Y\not\in \mathcal{P}_1$ and $1_Y\not\in \mathcal{P}_2$. So that $1_Y\not\in \mathcal{P}_1\cup\mathcal{P}_2$. Second, suppose that $\nu\in \mathcal{P}_1\cup\mathcal{P}_2$ and let $\mu\subseteq\nu$. Then, $\nu\in \mathcal{P}_1$ or $\nu\in\mathcal{P}_2$. This automatically leads to that $\mu\in \mathcal{P}_1$ or $\mu\in\mathcal{P}_2$. So $\mu\in \mathcal{P}_1\cup\mathcal{P}_2$. Third, let $\mu, \nu$ be fuzzy subsets such that $\mu\cap\nu\in \mathcal{P}_1\cup\mathcal{P}_2$. Then, $\mu\cap\nu\in\mathcal{P}_1$ or $\mu\cap\nu\in\mathcal{P}_2$. This implies that $\mu\in \mathcal{P}_1\cup\mathcal{P}_2$ or $\nu\in \mathcal{P}_1\cup\mathcal{P}_2$, as required.
\end{proof}

The next example elaborates that the class of fuzzy primals on a set $Y$ is not closed under the intersection operator in general.
\begin{example}
Let $Y=\{y,z\}$. Consider families $\P_1=\{\lambda\in\mathbb{I}^Y:0\leq \lambda(y)\leq 0.6,0\leq \lambda(z)\leq 1\}\cup\{0_Y\}$ and $\P_2=\{\eta\in\mathbb{I}^Y:0\leq \eta(y)\leq 1,0\leq \eta(z)\leq 0.7\}\cup\{0_Y\}$. Evidently, $\P_1,\P_2$ satisfy all conditions in Definition \ref{primaldefinition}, so they are fuzzy primals on $Y$. If $\mu,\nu$ are fuzzy sets such that $\mu(y)=0.5$, $\mu(z)=1$ and $\nu(y)=1$, $\nu(z)=0.6$, then $\nu\notin\P_1$ and $\mu\notin\P_2$. On the other hand, $\mu\cap\nu\in\P_1\cap\P_2$, but neither $\mu\in\P_1\cap\P_2$ nor $\nu\in\P_1\cap\P_2$. Thus, $\P_1\cap\P_2$ is not a fuzzy primal.
\end{example}

%However, we have the following result:
%\begin{theorem}Let $\P, \Q$ be fuzzy primal on $Y$. Then the family $$\P\Cap\Q:=\{p\cap q:p\in\P_1,q\in\P_2,\text{and }p\subseteq q\text{ or }q\subseteq p\}$$\end{theorem}\begin{proof}Since $1_Y\notin\P,\Q$, so $1_Y\notin\P\Cap\Q$. If $\mu\in\P\Cap\Q$, then $\mu=p\cap q$ for some $p\in\P$ and $q\in\Q$ such that either $p\subseteq q\text{ or }q\subseteq p$. WLOG, let $p\cap q=p$. Suppose $\eta$\end{proof}We close this section by the following remark and example.\begin{remark}Let $h_{\pi}: \mathbb{I}^Y\to \mathbb{I}^Z$ be a fuzzy mapping and $\mathcal{G}$ be a fuzzy primal on $Z$. The class $\{\pi^{-1}(\mu): \mu\in\mathcal{G}\}$ need not be a fuzzy primal on $Y$ in general. The next example confirms this fact.\end{remark}\begin{example}\end{example}

%%%%%%%%%%%%%%%%%%%%%%%%%%%%%%%%%%%%%%%%%%%%%%%%%%%%%%%%%% section 
\section{Primal fuzzy topology}\label{sec4}

\begin{definition}
Let $(Y,\T)$ be a fuzzy topological space and $\mathcal{P}$ be a fuzzy primal on $Y$. The 3-tuple $(Y,\T,\mathcal{P})$ is said to be a primal fuzzy topological space (briefly, PFTS).
\end{definition}

\begin{definition}
Let $(Y,\T,\mathcal{P})$ be a PFTS. Then a fuzzy mapping $(\cdot)^\diamond: \mathbb{I}^Y\to \mathbb{I}^Y$ is defined as follows $\lambda^\diamond(Y,\T,\mathcal{P})=\{y_t\in\F_t(Y): \bar{\lambda}\oplus\bar{\mu}\in\mathcal{P}$ for each $\mu\in \Q(y_t)\}$ for each fuzzy subset $\lambda$. For short, we write $\lambda^\diamond$ or $\lambda_\mathcal{P}^\diamond$ instead of  $\lambda^\diamond(Y,\T,\mathcal{P})$.
\end{definition}

The next examples establish that the properties $\lambda^\diamond \subseteq \lambda$ and $\lambda \subseteq \lambda^\diamond$ are generally false.

\begin{example}\label{ex1}
Let the fuzzy set $\mu$ be defined as in Example \ref{examp}. That is,
\[
\mu(x)=\begin{cases}
	0, &\text{ if } 0\leq x\leq 1/2;\vspace{.3cm}\\
	2x-1, &\text{ if } 1/2\leq x\leq 1.
\end{cases}
\]
Consider the fuzzy topology $\T=\{0_Y,\mu,1_Y\}$ on a set $Y$ and the fuzzy primal $\P=\mathbb{I}^Y-\{1_Y\}$. 
One can check that $\mu\diamond=Cl(\mu)=1_Y\nsubseteq\mu$ (c.f., Theorem \ref{thm1} (ii)).
\end{example}

\begin{example}\label{ex2}
Consider the indiscrete fuzzy topology $\T=\{0_Y,1_Y\}$ on a set $Y$ and any fuzzy primal $\P$. Since $1_Y$ is the only fuzzy open q-neighborhood of all fuzzy points $y_t$, so for any $\lambda\in\mathbb{I}^Y$, 
\begin{align*}
y_t\in\lambda^\diamond\Longleftrightarrow\bar{\lambda}\oplus\overline{1_Y}=\bar{\lambda}\oplus 0_Y=\bar{\lambda}\in\P.\tag{$\blacksquare$}
\end{align*}

Therefore, we have
\[
\lambda^\diamond=
\begin{cases}
0_Y,  &\text{ if } \bar{\lambda}\notin\P;\vspace{.3cm}\\
1_Y,  &\text{ if } \bar{\lambda}\in\P.
\end{cases}
\]

Take $0_Y\ne\bar{\mu}\notin\P$. Thus, $\mu\nsubseteq\mu\diamond=0_Y$.
\end{example}

\begin{theorem}\label{thm1}
Let $\lambda$ and $\mu$ be fuzzy subsets of a PFTS $(Y,\T,\mathcal{P})$. Then the next statements hold true.
\begin{enumerate}[(i)]
		\item $0_Y^\diamond=0_Y$.
		\item $\lambda^\diamond=Cl(\lambda^\diamond)\subseteq Cl(\lambda)$.
		\item If $\bar{\lambda}\notin\mathcal{P}$, then $\lambda^\diamond=0_Y$.
	%	\item If $\lambda\in\T^c$, then $\lambda^\diamond \subseteq \lambda$.  
		\item If $\lambda\subseteq \mu$, then $\lambda^\diamond\subseteq \mu^\diamond$.
		\item $(\lambda^\diamond)^\diamond\subseteq \lambda^\diamond$.
		\item  $(\lambda\cup \mu)^\diamond= \lambda^\diamond\cup \mu^\diamond$.
		\item  $(\lambda\cap \mu)^\diamond\subseteq \lambda^\diamond\cap \mu^\diamond$.
		\item If $\bar{\lambda}\notin\mathcal{P}$, then $(\lambda\cup \mu)^\diamond=\mu^\diamond$.
	\end{enumerate}
\end{theorem}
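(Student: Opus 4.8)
The plan is to establish the eight items in an order that lets the later ones feed on the earlier ones: the direct computations (i), (iii), (iv) first, then the topological statement (ii), from which (v) is immediate, and finally (vi)--(viii), the hard inclusion in (vi) being the crux. Before starting I would record two preliminaries. First, $1_Y\in\Q(y_t)$ for \emph{every} fuzzy point $y_t$, since $1_Y\in\T$ and $t>0$ forces $t+1_Y(y)>1$, i.e. $y_t\prec1_Y$. Second, the defining condition of $\lambda^\diamond$ is monotone in the value $t$, because $s\le t$ gives $\Q(y_s)\subseteq\Q(y_t)$; hence the set of fuzzy points $\lambda^\diamond$ is downward closed in $t$ and genuinely represents a fuzzy set. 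Granting these, item (i) falls out by taking $\mu=1_Y$: then $\overline{0_Y}\oplus\overline{1_Y}=1_Y\oplus0_Y=1_Y\notin\mathcal{P}$, so no $y_t$ lies in $0_Y^\diamond$. The same choice $\mu=1_Y$ proves (iii), since it yields $\bar\lambda\oplus0_Y=\bar\lambda\notin\mathcal{P}$, so the defining condition already fails. For (iv), if $\lambda\subseteq\mu$ then $\bar\mu\subseteq\bar\lambda$, whence $\bar\mu\oplus\bar\nu\subseteq\bar\lambda\oplus\bar\nu$ by Remark \ref{op-remark}(i); membership of the larger set in $\mathcal{P}$ together with downward closure (Definition \ref{primaldefinition}(ii)) pushes the smaller one into $\mathcal{P}$, giving $\lambda^\diamond\subseteq\mu^\diamond$.

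Item (ii) carries the topological content and splits into two containments. For $\lambda^\diamond\subseteq Cl(\lambda)$ I would argue by contradiction: if some q-neighborhood $\eta$ of $y_t\in\lambda^\diamond$ were not quasi-coincident with $\lambda$, then $\eta\subseteq\bar\lambda$, and choosing an open $\mu$ with $y_t\prec\mu\subseteq\eta$ gives $\mu\subseteq\bar\lambda$, so that $\bar\lambda\oplus\bar\mu=1_Y\notin\mathcal{P}$ --- contradicting $\mu\in\Q(y_t)$ and $y_t\in\lambda^\diamond$. For the idempotent-type equality $\lambda^\diamond=Cl(\lambda^\diamond)$ only $Cl(\lambda^\diamond)\subseteq\lambda^\diamond$ needs work (the other inclusion is expansiveness of $Cl$): given $y_t\in Cl(\lambda^\diamond)$ and any $\mu\in\Q(y_t)$, the set $\mu$ is itself a q-neighborhood of $y_t$, hence quasi-coincident with the fuzzy set $\lambda^\diamond$; extracting a fuzzy point $z_s\in\lambda^\diamond$ with $z_s\prec\mu$ shows $\mu\in\Q(z_s)$, and $z_s\in\lambda^\diamond$ then delivers $\bar\lambda\oplus\bar\mu\in\mathcal{P}$. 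As $\mu$ was arbitrary, $y_t\in\lambda^\diamond$. Item (v) is then immediate: applying (ii) to $\lambda^\diamond$ gives $(\lambda^\diamond)^\diamond\subseteq Cl(\lambda^\diamond)=\lambda^\diamond$.

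The main obstacle is the nontrivial inclusion in (vi), namely $(\lambda\cup\mu)^\diamond\subseteq\lambda^\diamond\cup\mu^\diamond$ (the reverse inclusion is two applications of (iv)). I would prove it contrapositively. If $y_t\notin\lambda^\diamond$ and $y_t\notin\mu^\diamond$, choose $\nu_1,\nu_2\in\Q(y_t)$ with $\bar\lambda\oplus\bar\nu_1\notin\mathcal{P}$ and $\bar\mu\oplus\bar\nu_2\notin\mathcal{P}$, and set $\nu=\nu_1\cap\nu_2$, which again lies in $\Q(y_t)$ since $t+\nu(y)=\min(t+\nu_1(y),t+\nu_2(y))>1$. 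From $\nu\subseteq\nu_i$ and Remark \ref{op-remark}(i) one gets $\bar\lambda\oplus\bar\nu_1\subseteq\bar\lambda\oplus\bar\nu$ and $\bar\mu\oplus\bar\nu_2\subseteq\bar\mu\oplus\bar\nu$, so Theorem \ref{pp1}(ii) upgrades both to $\bar\lambda\oplus\bar\nu\notin\mathcal{P}$ and $\bar\mu\oplus\bar\nu\notin\mathcal{P}$; Theorem \ref{pp1}(iii) then yields $(\bar\lambda\oplus\bar\nu)\cap(\bar\mu\oplus\bar\nu)\notin\mathcal{P}$. By Remark \ref{op-remark}(ii) this set is contained in $(\bar\lambda\cap\bar\mu)\oplus\bar\nu=\overline{\lambda\cup\mu}\oplus\bar\nu$, so Theorem \ref{pp1}(ii) forces $\overline{\lambda\cup\mu}\oplus\bar\nu\notin\mathcal{P}$, i.e. $y_t\notin(\lambda\cup\mu)^\diamond$. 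The delicate point is orienting every inclusion so that the two closure axioms of the primal (Theorem \ref{pp1}(ii),(iii)) and the distribution estimate of Remark \ref{op-remark}(ii) chain together without loss; passing to the common refinement $\nu=\nu_1\cap\nu_2$ is what makes this possible. Finally, (vii) follows from (iv) applied to $\lambda\cap\mu\subseteq\lambda$ and $\lambda\cap\mu\subseteq\mu$, and (viii) follows by combining (vi) with (iii): when $\bar\lambda\notin\mathcal{P}$ we have $\lambda^\diamond=0_Y$, so $(\lambda\cup\mu)^\diamond=\lambda^\diamond\cup\mu^\diamond=\mu^\diamond$.
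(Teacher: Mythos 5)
Your proposal is correct and follows essentially the same route as the paper: the heart of the argument --- the point-extraction proof that $Cl(\lambda^\diamond)\subseteq\lambda^\diamond$ in (ii), and the contrapositive for (vi) using the common refinement $\nu_1\cap\nu_2$ together with Remark \ref{op-remark}(ii) and Theorem \ref{pp1}(ii),(iii) --- matches the paper's proof step for step. The only (harmless) variations are cosmetic: for (i) and (iii) you exhibit the single failing q-neighborhood $1_Y$ where the paper notes $\overline{0_Y}\oplus\bar\nu=1_Y$ for every $\nu$ and, respectively, derives a contradiction from $\bar\lambda\subseteq\bar\lambda\oplus\bar\nu$, and your preliminary observation that $\lambda^\diamond$ is downward closed in $t$ is a sensible foundational remark the paper leaves implicit.
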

\begin{proof}
\begin{enumerate}[(i)]
	\item Since $\bar{0}_Y\oplus\bar{\lambda}=1_Y$ for any fuzzy set $\lambda$ and $1_Y\notin\mathcal{P}$, so $0_Y^\diamond=0_Y$.
	
	\item  If $y_t\in Cl(\lambda^\diamond)$, then $\lambda^\diamond\prec\nu$ for each $\nu\in\Q(y_t)$. This means that $\lambda^\diamond(z)+\nu(z)>1$ for some $z\in Y$. Let $\lambda^\diamond(z)=r$. Then $r+\nu(z)>1$ and so $2-(r+\nu)(z)<1$. This gives that $z_r\in\lambda^\diamond$ and $\nu\in\Q(z_r)$. Since $\nu\in\Q(y_t)$, then $\bar{\lambda}\oplus\bar{\nu}\in\mathcal{P}$ and so $y_t\in\lambda^\diamond$. Hence, $Cl(\lambda^\diamond)\subseteq\lambda^\diamond$. The reverse of the inclusion is always true. Thus, $Cl(\lambda^\diamond)= \lambda^\diamond$.
	
	We now show that $\lambda^\diamond\subseteq Cl(\lambda)$. If $y_t\notin Cl(\lambda)$, then $\lambda\not\prec\nu$ for some $\nu\in\Q(y_t)$. That is, $\lambda(z)+\mu(z)\leq 1$ for all $z\in X$. This implies that $\bar{\lambda}\oplus\bar{\nu}=1_Y\notin\mathcal{P}$ and hence, $y_t\notin \lambda^\diamond$. Therefore, $\lambda^\diamond\subseteq Cl(\lambda)$.
	
	\item Suppose otherwise that there exists $y_t\in\F_t(Y)$ such that $y_t\in\lambda^\diamond$. Then $\bar{\lambda}\oplus\bar{\nu}\in\mathcal{P}$ for each $\nu\in\Q(y_t)$. But, since $\bar{\lambda}\notin\mathcal{P}$, by Theorem \ref{pp1}, $\bar{\lambda}\oplus\bar{\nu}\notin\mathcal{P}$, a contradiction. Thus, $\lambda^\diamond=0_Y$.
	
	\item  Assume that $\lambda\subseteq\mu$. If $y_t\in \lambda^\diamond$, then  $\bar{\lambda}\oplus\bar{\nu}\in\mathcal{P}$ for all $\nu\in\Q(y_t)$. Since $\lambda\subseteq\mu$, so $\bar{\mu}\subseteq\bar{\lambda}$, and then, by Remark \ref{op-remark}, $\bar{\mu}\oplus\bar{\nu}\subseteq\bar{\lambda}\oplus\bar{\nu}$. This implies that $\bar{\mu}\oplus\bar{\nu}\in\mathcal{P}$. Hence,  $y_t\in \mu^\diamond$ and thus, $\lambda^\diamond\subseteq \mu^\diamond$.
	
	\item By (ii), $\lambda^{\diamond\diamond}=Cl(\lambda^{\diamond\diamond})\subseteq Cl(\lambda^\diamond)=\lambda^\diamond$. 
	
	\item Since $\lambda\subseteq\lambda\cup \mu$ and $\mu\subseteq\lambda\cup \mu$, then, by (iv), $\lambda^\diamond\subseteq$ $(\lambda\cup \mu)^\diamond$ and $\mu^\diamond\subseteq(\lambda\cup \mu)^\diamond$. It follows that $\lambda^\diamond\cup\mu^\diamond\subseteq(\lambda\cup\mu)^\diamond$. For the converse of the inclusion, if $y_t\notin\lambda^\diamond\cup\mu^\diamond$, then $y_t\notin\lambda^\diamond$ and $y_t\notin\mu^\diamond$. This implies that there exist $\nu,\eta\in\Q(y_t)$ such that $\bar{\lambda}\oplus\bar{\nu}\notin\mathcal{P}$ and $\bar{\mu}\oplus\bar{\eta}\notin\mathcal{P}$. Set $\theta=\nu\cap\eta$. Then $\theta\in\Q(y_t)$ for which $\bar{\lambda}\oplus\bar{\theta}\notin\mathcal{P}$ and $\bar{\mu}\oplus\bar{\theta}\notin\mathcal{P}$, and therefore, $(\bar{\lambda}\oplus\bar{\theta})\cap(\bar{\mu}\oplus\bar{\theta})\notin\mathcal{P}$(from Theorem \ref{pp1}). Since $\mathcal{P}$ is a fuzzy primal and $(\bar{\lambda}\oplus\bar{\theta})\cap(\bar{\mu}\oplus\bar{\theta})\subseteq(\bar{\lambda}\cap\bar{\mu})\oplus\bar{\theta}$ (from Remark \ref{op-remark}),  we get that $\overline{(\lambda\cup\mu)}\oplus\bar{\theta}=(\bar{\lambda}\cap\bar{\mu})\oplus\bar{\theta}\notin\mathcal{P}$. Thus, $y_t\notin(\lambda\cup\mu)^\diamond$. Consequently, $(\lambda\cup \mu)^\diamond\subseteq\lambda^\diamond\cup \mu^\diamond$. Hence, $(\lambda\cup \mu)^\diamond=\lambda^\diamond\cup \mu^\diamond$.
	
	\item Since $\lambda\cap\mu\subseteq\lambda$ and $\lambda\cap\mu\subseteq\mu$, then, by (iv), $(\lambda\cap\mu)^\diamond\subseteq\lambda^\diamond$ and $(\lambda\cap \mu)^\diamond\subseteq\mu^\diamond$. Therefore, 	$(\lambda\cap\mu)^\diamond\subseteq \lambda^\diamond\cap\mu^\diamond$.
	
	\item It follows from parts (iii) and (vi).
\end{enumerate}
\end{proof}

\begin{theorem}\label{closure=diamond}
Let $(Y,\T,\mathcal{P})$ be a PFTS. Then $\mu^\diamond=Cl(\mu-\bar{\lambda})$ for some $\lambda\notin\P$.
\end{theorem}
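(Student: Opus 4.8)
The plan is to exhibit an explicit $\lambda\notin\P$ realizing the identity rather than to argue existence abstractly. Theorem~\ref{thm1}(ii) already tells us that $\mu^\diamond$ is fuzzy closed, i.e.\ $\mu^\diamond=Cl(\mu^\diamond)$, and that $\mu^\diamond\subseteq Cl(\mu)$; this essentially forces the candidate. I would set $\bar\lambda:=\mu-\mu^\diamond$, equivalently $\lambda:=\overline{\mu-\mu^\diamond}$. Using the pointwise identity $a-(a-c)=\min(a,c)$ valid for all $a,c\in\mathbb{I}$, one gets $\mu-\bar\lambda=\mu\cap\mu^\diamond$, so the target equation reduces to showing $Cl(\mu\cap\mu^\diamond)=\mu^\diamond$.

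First I would dispatch the easy inclusion. Since $\mu\cap\mu^\diamond\subseteq\mu^\diamond$ and $Cl$ is monotone, Theorem~\ref{thm1}(ii) gives $Cl(\mu\cap\mu^\diamond)\subseteq Cl(\mu^\diamond)=\mu^\diamond$, so $Cl(\mu-\bar\lambda)\subseteq\mu^\diamond$ for free. It is worth noting that the same argument shows that \emph{any} $\lambda$ with $\mu-\bar\lambda\subseteq\mu^\diamond$ yields this inclusion, and that (because $\mu-\bar\lambda\subseteq\mu$ always) the largest admissible such choice is precisely $\mu\cap\mu^\diamond$; hence the candidate above is essentially the only viable one, and no alternative choice of $\lambda$ can circumvent what follows.

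It then remains to establish the reverse inclusion $\mu^\diamond\subseteq Cl(\mu\cap\mu^\diamond)$ together with the membership requirement $\lambda\notin\P$. For the inclusion I would take $y_t\in\mu^\diamond$, fix $\nu\in\Q(y_t)$, and try to produce a $z$ with $(\mu\cap\mu^\diamond)(z)+\nu(z)>1$, starting from the defining property $\bar\mu\oplus\bar\nu\in\P$ (equivalently $\mu\odot\nu\in\mathcal{G}$ for the dual grill, via $\overline{\mu\odot\nu}=\bar\mu\oplus\bar\nu$). For the membership I would unwind $\lambda\notin\P$ into $\mu-\mu^\diamond\notin\mathcal{G}$, i.e.\ into the statement that the part of $\mu$ not captured by $\mu^\diamond$ is negligible for $\P$.

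The genuine obstacle is exactly these last two points, and I expect them to demand more than the bare hypotheses of the theorem. The requirement $\lambda\notin\P$, namely $\mu-\mu^\diamond\notin\mathcal{G}$, is precisely a compatibility condition between $\P$ and $\T$ of the kind introduced in Section~5; without such a hypothesis both $\lambda\notin\P$ and the density inclusion $\mu^\diamond\subseteq Cl(\mu\cap\mu^\diamond)$ can break down, as one sees already in the classical ideal-topological mirror of this statement (where removing a ``small'' set from a convergent configuration leaves its closure strictly larger than the local/derived set). Accordingly, the natural route is to carry out the reverse inclusion and the verification $\mu-\mu^\diamond\notin\mathcal{G}$ under compatibility of $\P$ with $\T$; once that obstacle is cleared, the remaining steps (the pointwise identity and the monotone inclusion) are routine.
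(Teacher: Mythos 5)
Your reduction and your diagnosis are both correct, and in fact they expose a genuine error in the paper's own proof of this theorem. The paper's argument for the inclusion $\mu^\diamond\subseteq Cl(\mu-\bar{\lambda})$ is exactly your missing lemma in disguise: the displayed inequality $2-(\nu(y)+\mu(y))\geq\lambda(y)$ shows $\lambda\subseteq\bar{\mu}\oplus\bar{\nu}$, so Theorem \ref{pp1}(ii) gives $\bar{\mu}\oplus\bar{\nu}\notin\P$, and this works for \emph{every} $\lambda\notin\P$ --- precisely the ``density'' inclusion you wanted for your candidate. But in the reverse direction the paper writes ``if $y_t\notin\mu^\diamond$, then there exists $\theta\in\Q(y_t)$ such that $\bar{\mu}\oplus\bar{\theta}=\lambda\notin\P$'': this $\lambda$ depends on $y_t$, so the argument only shows that each $y_t\notin\mu^\diamond$ avoids $Cl(\mu-\bar{\lambda}_{y_t})$ for its own witness $\lambda_{y_t}$. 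Concluding $Cl(\mu-\bar{\lambda})\subseteq\mu^\diamond$ for one fixed $\lambda$ is a quantifier slip; since $\P$ is only closed under finite intersections of non-members, the witnesses cannot in general be merged. What the two halves actually prove is the pointwise statement, i.e.\ $\mu^\diamond=\bigcap\{Cl(\mu-\bar{\lambda}):\lambda\notin\P\}$, not the theorem as stated.

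Your suspicion that the single-witness version genuinely needs a compatibility hypothesis is also right, and your maximality remark pins down the obstruction exactly. Concretely: let $Y=\mathbb{R}$, let $\T=\{\chi_U:U\text{ open in the usual topology}\}$, let $\P=\{\lambda\in\mathbb{I}^Y:\lambda(y)<1\text{ for infinitely many }y\}$ (one checks Definition \ref{primaldefinition} directly), and let $\mu=\chi_{\mathbb{Z}}$. Every $y_t$ has the open q-neighborhood $\nu=\chi_{(y-1/3,\,y+1/3)}$, and $\bar{\mu}\oplus\bar{\nu}$ equals $1$ except at the at most one integer in that interval, so $\bar{\mu}\oplus\bar{\nu}\notin\P$ and hence $\mu^\diamond=0_Y$. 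But $Cl(\mu-\bar{\lambda})=0_Y$ forces $\mu-\bar{\lambda}=0_Y$, i.e.\ $\lambda(y)=0$ for all $y\in\mathbb{Z}$, whence $\lambda\in\P$: no single $\lambda\notin\P$ can witness the equality, and indeed $\overline{\mu-\mu^\diamond}=\chi_{\mathbb{R}\setminus\mathbb{Z}}\in\P$, which by your maximality argument rules out every candidate at once. Under $\T\approx\P$ your route does close: as fuzzy sets $\mu-\mu^\diamond\subseteq\mu^\square$, so $\overline{\mu^\square}\subseteq\overline{\mu-\mu^\diamond}$, and Theorem \ref{c3}(iii) together with Theorem \ref{pp1}(ii) yields $\lambda=\overline{\mu-\mu^\diamond}\notin\P$; the ``for every $\lambda\notin\P$'' lemma then gives $\mu^\diamond\subseteq Cl(\mu\cap\mu^\diamond)$, and your easy inclusion finishes. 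So the theorem should either be restated in the intersection form or carry $\T\approx\P$ as a hypothesis, and your proposal, completed under compatibility, is the correct repair.
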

\begin{proof}
On the first hand, we want to show that $\mu^\diamond\subseteq Cl(\mu-\bar{\lambda})$ for some $\lambda\notin\P$. Let $y_t\notin Cl(\mu-\bar{\lambda})$. Then there exists $\nu\in\Q(y_t)$ such that $$\nu(y)+\big(\mu(y)-(1-\lambda(y))\big)\leq 1.$$
This implies that
$$\nu(y)+\mu(y)-1+\lambda(y)\leq 1,$$
and so
$$2-\big(\nu(y)+\mu(y)\big)\geq \lambda(y)\text{ for all }y\in Y.$$
Therefore, $\bar{\mu}\oplus\bar{\nu}\notin\P$ and thus, $y_t\notin\mu^\diamond$. This proves that $\mu^\diamond\subseteq Cl(\mu-\bar{\lambda})$.

On the other hand, if $y_t\notin\mu^\diamond$, then there exists $\theta\in\Q(y_t)$ such that $\bar{\mu}\oplus\bar{\theta}=\lambda\notin\P$. We want to check that $$\theta+\big(\mu-\bar{\lambda}\big)\leq 1_Y.$$
Given $y\in Y$. If $\theta(y)+\mu(y)>1$, then $(1-\theta)(y)+(1-\mu)(y)=\lambda(y)$ and therefore, $1-\theta(y)+1-\mu(y)=\lambda(y)\cdots(\blacktriangle)$. Thus, $\theta(y)+\mu(y)-1+\lambda(y)=1$. If $\theta(y)+\mu(y)\leq 1$, by $(\blacktriangle)$, we get $\lambda(y)=0$. This means that $\theta(y)+\mu(y)-1+\lambda(y)\leq 1$. Both cases imply that $\theta(y)+\mu(y)-1+\lambda(y)\leq 1$ means $\theta+\big(\mu-\bar{\lambda}\big)\leq 1_Y$. This yields that  $y_t\notin Cl(\mu-\bar{\lambda})$ and then  $Cl(\mu-\bar{\lambda})\subseteq\mu^\diamond$. Hence, $\mu^\diamond=Cl(\mu-\bar{\lambda})$.
\end{proof}

\begin{theorem}\label{twoprimal}
Let $\P_1,\P_2$ be fuzzy primals on a fuzzy topological space $(Y,\T)$ and let $\lambda\in\mathbb{I}^Y$. The next statements hold:
\begin{enumerate}[(i)]
	\item $\P_1\subseteq\P_2$ implies $\lambda^\diamond_{\P_1}\subseteq\lambda^\diamond_{\P_2}$.
	\item $\lambda^\diamond_{\P_1\cup\P_2}=\lambda^\diamond_{\P_1}\cup\lambda^\diamond_{\P_2}$.
\end{enumerate}
\end{theorem}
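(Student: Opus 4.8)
The plan is to reduce both parts to the defining membership criterion for the operator, namely that $y_t\in\lambda^\diamond_\P$ holds precisely when $\bar{\lambda}\oplus\bar{\mu}\in\P$ for every $\mu\in\Q(y_t)$. Part (i) is then a one-line verification: assuming $\P_1\subseteq\P_2$, I would take any $y_t\in\lambda^\diamond_{\P_1}$, so that $\bar{\lambda}\oplus\bar{\mu}\in\P_1\subseteq\P_2$ for each $\mu\in\Q(y_t)$, which is exactly the condition placing $y_t$ in $\lambda^\diamond_{\P_2}$. No obstacle arises here, since the monotonicity is inherited directly from the inclusion of the primals.

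For part (ii), the inclusion $\lambda^\diamond_{\P_1}\cup\lambda^\diamond_{\P_2}\subseteq\lambda^\diamond_{\P_1\cup\P_2}$ is immediate from (i), because $\P_1,\P_2\subseteq\P_1\cup\P_2$. The substance of the theorem lies in the reverse inclusion, which I would establish by contraposition, mirroring the argument used for Theorem \ref{thm1}(vi). Suppose $y_t\notin\lambda^\diamond_{\P_1}\cup\lambda^\diamond_{\P_2}$; then there exist $\nu\in\Q(y_t)$ with $\bar{\lambda}\oplus\bar{\nu}\notin\P_1$ and $\eta\in\Q(y_t)$ with $\bar{\lambda}\oplus\bar{\eta}\notin\P_2$. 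The natural move is to set $\theta=\nu\cap\eta$ and show that this single q-neighborhood simultaneously witnesses $y_t\notin\lambda^\diamond_{\P_1\cup\P_2}$.

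The key steps are then as follows. First, I would confirm $\theta\in\Q(y_t)$: it is open since $\T$ is closed under finite intersection, and $t+\theta(y)=\min(t+\nu(y),t+\eta(y))>1$ because both $\nu$ and $\eta$ are q-neighborhoods of $y_t$. Second, from $\theta\subseteq\nu$ and $\theta\subseteq\eta$ I get $\bar{\nu}\subseteq\bar{\theta}$ and $\bar{\eta}\subseteq\bar{\theta}$, so by the monotonicity of $\oplus$ in Remark \ref{op-remark}(i) we have $\bar{\lambda}\oplus\bar{\nu}\subseteq\bar{\lambda}\oplus\bar{\theta}$ and $\bar{\lambda}\oplus\bar{\eta}\subseteq\bar{\lambda}\oplus\bar{\theta}$. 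Third, applying Theorem \ref{pp1}(ii) to each primal, the non-membership of the smaller set forces the larger one out as well, giving $\bar{\lambda}\oplus\bar{\theta}\notin\P_1$ and $\bar{\lambda}\oplus\bar{\theta}\notin\P_2$, hence $\bar{\lambda}\oplus\bar{\theta}\notin\P_1\cup\P_2$. Since $\theta\in\Q(y_t)$, this yields $y_t\notin\lambda^\diamond_{\P_1\cup\P_2}$, completing the reverse inclusion.

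I expect the only delicate point to be arranging that one common $\theta$ defeats membership in both primals at once. The intersection trick works precisely because $\oplus$ enlarges non-members in the right direction and because in a primal a superset of a non-member is again a non-member (Theorem \ref{pp1}(ii))—this is the opposite monotonicity from the one exploited in part (i), which is exactly why passing to the intersection $\nu\cap\eta$ (rather than a union) is the correct choice. Everything else in the argument is routine bookkeeping with the {\L}ukasiewicz operations.
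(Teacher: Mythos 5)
Your proposal is correct and follows essentially the same route as the paper's proof: part (i) by direct unwinding of the membership criterion, and the reverse inclusion in part (ii) by contraposition, passing to the intersection $\theta=\nu\cap\eta\in\Q(y_t)$ and using the monotonicity of $\oplus$ (Remark \ref{op-remark}(i)) together with Theorem \ref{pp1} to expel $\bar{\lambda}\oplus\bar{\theta}$ from both primals at once. If anything, you make explicit the $\oplus$-monotonicity step that the paper invokes only implicitly via $\bar{\mu},\bar{\nu}\subseteq\bar{\mu}\cup\bar{\nu}=\overline{\mu\cap\nu}$.
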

\begin{proof}
\begin{enumerate}[(i)]
	\item Let $y_t\in\lambda^\diamond_{\P_1}$. Then $\bar{\lambda}\oplus\bar{\nu}\in\P_1$ for all $\nu\in\Q(y_t)$. Since $\P_1\subseteq\P_2$, so $\bar{\lambda}\oplus\bar{\nu}\in\P_2$ for all $\nu\in\Q(y_t)$. Thus, $y_t\in\lambda^\diamond_{\P_2}$.
	
	\item  Since $\P_1\subseteq\P_1\cup\P_2$ and $\P_2\subseteq\P_1\cup\P_2$, by part (i), $\lambda^\diamond_{\P_1}\subseteq\lambda^\diamond_{\P_1\cup\P_2}$ and $\lambda^\diamond_{\P_2}\subseteq\lambda^\diamond_{\P_1\cup\P_2}$. Therefore, $\lambda^\diamond_{\P_1}\cup\lambda^\diamond_{\P_2}\subseteq\lambda^\diamond_{\P_1\cup\P_2}$.
	
	On the other hand, if $y_t\notin\lambda^\diamond_{\P_1}\cup\lambda^\diamond_{\P_2}$, then $y_t\notin\lambda^\diamond_{\P_1}$ and $y_t\notin\lambda^\diamond_{\P_2}$. This implies that there exist $\mu,\nu\in\Q(y_t)$ such that $\bar{\lambda}\oplus\bar{\mu}\notin\P_1$ and $\bar{\lambda}\oplus\bar{\nu}\notin\P_2$, respectively. Since $\mu\cap\nu\in\Q(y_t)$ and $\bar{\mu},\bar{\nu}\subseteq\bar{\mu}\cup\bar{\nu}$, by Theorem \ref{pp1}, $\bar{\lambda}\oplus(\bar{\mu}\cup\bar{\nu})\notin\P_1$ and $\bar{\lambda}\oplus(\bar{\mu}\cup\bar{\nu})\notin\P_2$. Therefore, $\bar{\lambda}\oplus\overline{\mu\cap\nu}=\bar{\lambda}\oplus(\bar{\mu}\cup\bar{\nu})\notin\P_1\cup\P_2$. Hence, $y_t\notin\lambda^\diamond_{\P_1\cup\P_2}$ and thus, $\lambda^\diamond_{\P_1\cup\P_2}\subseteq\lambda^\diamond_{\P_1}\cup\lambda^\diamond_{\P_2}$. The claim follows.
\end{enumerate}
\end{proof}

%%%%%%%%%%%%%%%55 New Operator

\begin{definition}
	Let $(Y,\T,\mathcal{P})$ be a PFTS. Then a fuzzy mapping $Cl^\diamond: \mathbb{I}^Y\to \mathbb{I}^Y$ is defined as follows $Cl^\diamond(\mu)=\mu\cup \mu^\diamond$ for any fuzzy set $\mu$. We may use $Cl^\diamond_\P$ instead of $Cl^\diamond$ if any kind of confusion arises. 
\end{definition}

\begin{theorem}\label{thm2}
	Let $\lambda$ and $\mu$ be fuzzy subsets of a PFTS $(Y,\T,\mathcal{P})$. Then the next statements hold true:
	\begin{enumerate}[(i)]
		\item $Cl^\diamond(0_Y)= 0_Y$.
		\item $\lambda\subseteq Cl^\diamond(\lambda)$.
		\item $\lambda\subseteq \mu$ implies $Cl^\diamond(\lambda)\subseteq Cl^\diamond(\mu)$.
		\item $Cl^\diamond(\lambda\cup \mu)= Cl^\diamond(\lambda)\cup Cl^\diamond(\mu)$.
		\item $Cl^\diamond(Cl^\diamond(\lambda))= Cl^\diamond(\lambda)$.
	\end{enumerate}
\end{theorem}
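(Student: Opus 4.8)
The plan is to read off the four Kuratowski axioms of Definition \ref{Kuratowski} directly from the defining formula $Cl^\diamond(\mu)=\mu\cup\mu^\diamond$, reducing each one to the corresponding statement about $(\cdot)^\diamond$ already established in Theorem \ref{thm1}. Parts (i)--(iv) require essentially no new work. For (i), write $Cl^\diamond(0_Y)=0_Y\cup 0_Y^\diamond$ and invoke Theorem \ref{thm1}(i) to replace $0_Y^\diamond$ by $0_Y$. For (ii), note that $\lambda$ is one of the two terms in the union $\lambda\cup\lambda^\diamond$, so the inclusion $\lambda\subseteq Cl^\diamond(\lambda)$ is immediate. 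For (iii), assume $\lambda\subseteq\mu$; then Theorem \ref{thm1}(iv) gives $\lambda^\diamond\subseteq\mu^\diamond$, and combining this with $\lambda\subseteq\mu$ and taking unions yields $\lambda\cup\lambda^\diamond\subseteq\mu\cup\mu^\diamond$, i.e.\ $Cl^\diamond(\lambda)\subseteq Cl^\diamond(\mu)$.

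For (iv), I would expand $Cl^\diamond(\lambda\cup\mu)=(\lambda\cup\mu)\cup(\lambda\cup\mu)^\diamond$, apply the additivity of $(\cdot)^\diamond$ from Theorem \ref{thm1}(vi) to rewrite $(\lambda\cup\mu)^\diamond$ as $\lambda^\diamond\cup\mu^\diamond$, and then merely reassociate and commute the four-fold union into $(\lambda\cup\lambda^\diamond)\cup(\mu\cup\mu^\diamond)=Cl^\diamond(\lambda)\cup Cl^\diamond(\mu)$. No inequality is lost here because Theorem \ref{thm1}(vi) is an equality.

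The only part demanding genuine care is idempotency (v), and it is the main obstacle precisely because it is where the operator must be shown to stabilize. The strategy is to compute $Cl^\diamond(Cl^\diamond(\lambda))=Cl^\diamond(\lambda\cup\lambda^\diamond)$, then split this using part (iv), which has just been proved, into $Cl^\diamond(\lambda)\cup Cl^\diamond(\lambda^\diamond)$. The crucial observation is that the second term collapses: $Cl^\diamond(\lambda^\diamond)=\lambda^\diamond\cup(\lambda^\diamond)^\diamond=\lambda^\diamond$, where the last equality uses the contraction $(\lambda^\diamond)^\diamond\subseteq\lambda^\diamond$ supplied by Theorem \ref{thm1}(v). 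Substituting back gives $Cl^\diamond(Cl^\diamond(\lambda))=Cl^\diamond(\lambda)\cup\lambda^\diamond=(\lambda\cup\lambda^\diamond)\cup\lambda^\diamond=\lambda\cup\lambda^\diamond=Cl^\diamond(\lambda)$, which is the forward inclusion; the reverse inclusion $Cl^\diamond(\lambda)\subseteq Cl^\diamond(Cl^\diamond(\lambda))$ is just extensivity (part (ii)) applied to $Cl^\diamond(\lambda)$.

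I expect essentially all the difficulty to be concentrated in that single inclusion $(\lambda^\diamond)^\diamond\subseteq\lambda^\diamond$: it is the engine behind idempotency, and without it the iterated operator would not terminate. Since that inclusion is already in hand from Theorem \ref{thm1}(v) (itself a consequence of the closure identity $\lambda^\diamond=Cl(\lambda^\diamond)$ in Theorem \ref{thm1}(ii)), the proof becomes a routine assembly of the previously verified properties of $(\cdot)^\diamond$, and the upshot is that $Cl^\diamond$ is a Kuratowski fuzzy closure operator, hence induces a fuzzy topology.
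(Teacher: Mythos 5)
Your proposal is correct and takes essentially the same route as the paper: parts (i)--(iv) are read off from the definition $Cl^\diamond(\mu)=\mu\cup\mu^\diamond$ together with Theorem \ref{thm1}, and idempotency (v) is reduced, exactly as in the paper, to the inclusion $(\lambda^\diamond)^\diamond\subseteq\lambda^\diamond$ from Theorem \ref{thm1}(v), with the reverse inclusion supplied by extensivity. The only (cosmetic) divergence is that for (iv) you cite the additivity $(\lambda\cup\mu)^\diamond=\lambda^\diamond\cup\mu^\diamond$ of Theorem \ref{thm1}(vi), which is in fact the correct reference, whereas the paper's text points to part (vii).
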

\begin{proof}
\begin{enumerate}[(i)]
	\item Since $\overline{0_Y}\notin\P$, by Theorem \ref{thm1} (iii), $0_Y^\diamond=0_Y$, and hence $Cl^\diamond(0_Y)=0_Y\cup 0_Y^\diamond=0_Y$.
	
\item This one is easy as $\lambda\subseteq\lambda\cup\lambda^\diamond=Cl^\diamond(\lambda)$.
	
\item Suppose $\lambda,\mu\in \mathbb{I}^Y$ with $\lambda\subseteq\mu$. By Theorem \ref{thm1} (iv), $\lambda^\diamond\subseteq$ $\mu^\diamond$ and so $\lambda\cup\lambda^\diamond\subseteq\mu\cup\mu^\diamond$. Thus, $Cl^\diamond(\lambda)\subseteq Cl^\diamond(\mu)$.
	
\item By the same technique used in (iii) and applying Theorem \ref{thm1} (vii), one can easily conclude that $Cl^\diamond(\lambda\cup \mu)= Cl^\diamond(\lambda)\cup Cl^\diamond(\mu)$.
	
\item To show that $Cl^\diamond(Cl^\diamond(\lambda))\subseteq Cl^\diamond(\lambda)$, we implicitly use multiple statements of Theorem \ref{thm1}. Now,
\begin{eqnarray*}
		Cl^\diamond(Cl^\diamond(\lambda))&=&Cl^\diamond(\lambda)\bigcup(Cl^\diamond(\lambda))^\diamond\\
		&=&Cl^\diamond(\lambda)\bigcup(\lambda\cup\lambda^\diamond)^\diamond\\
		&=&Cl^\diamond(\lambda)\bigcup\lambda^\diamond\bigcup(\lambda^\diamond)^\diamond\\
		&\subseteq&Cl^\diamond(\lambda)\bigcup\lambda^\diamond\bigcup\lambda^\diamond\\
		&=&Cl^\diamond(\lambda).
\end{eqnarray*}
	The converse is always true by using (ii) and (iii). Therefore, $Cl^\diamond(Cl^\diamond(\lambda))=Cl^\diamond(\lambda)$.
\end{enumerate}
\end{proof}

\begin{theorem}
Let $(Y,\T,\mathcal{P})$ be a PFTS. Then a fuzzy mapping $Cl^\diamond: \mathbb{I}^Y\to \mathbb{I}^Y$ given by $Cl^\diamond(\mu)=\mu\cup \mu^\diamond$ for any fuzzy subset $\mu$ is a Kuratowski's fuzzy closure operator.
\end{theorem}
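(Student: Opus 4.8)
The plan is entirely routine, because the statement is a repackaging of results we have already proved. By Definition~\ref{Kuratowski}, an operator $c:\mathbb{I}^Y\to\mathbb{I}^Y$ is a fuzzy Kuratowski closure operator exactly when it satisfies the four axioms $c(0_Y)=0_Y$, $\mu\subseteq c(\mu)$, $c(\mu\cup\nu)=c(\mu)\cup c(\nu)$, and $c(c(\mu))=c(\mu)$. Since each of these four properties has already been verified for $Cl^\diamond$ in Theorem~\ref{thm2}, I would prove the present statement simply by reading off the correspondence and assembling the pieces: axiom~(i) is precisely Theorem~\ref{thm2}(i); axiom~(ii) is Theorem~\ref{thm2}(ii); the additivity axiom~(iii) over finite unions is Theorem~\ref{thm2}(iv); and the idempotence axiom~(iv) is Theorem~\ref{thm2}(v). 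Thus $Cl^\diamond$ fulfils every defining requirement of Definition~\ref{Kuratowski} and is a fuzzy Kuratowski closure operator.

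It is worth recording where the genuine content sits, even though all of it is already at our disposal. The monotonicity clause Theorem~\ref{thm2}(iii) is \emph{not} one of the Kuratowski axioms and so is not directly needed here; it served earlier as the natural stepping stone toward the additivity of Theorem~\ref{thm2}(iv). The only substantive axiom is idempotence, which in turn rested on the behaviour of $(\cdot)^\diamond$ collected in Theorem~\ref{thm1}, most notably $(\lambda\cup\mu)^\diamond=\lambda^\diamond\cup\mu^\diamond$ together with $(\lambda^\diamond)^\diamond\subseteq\lambda^\diamond$; these force $Cl^\diamond(Cl^\diamond(\lambda))\subseteq Cl^\diamond(\lambda)$, while the reverse inclusion is immediate from axioms~(ii) and~(iii).

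Consequently there is no real obstacle to overcome: the theorem is a bookkeeping corollary of Theorem~\ref{thm2}, and the only care required is to match the enumeration of Definition~\ref{Kuratowski} correctly against that of Theorem~\ref{thm2} (in particular, to notice that the monotonicity clause of the latter is auxiliary rather than one of the axioms being checked). Once the four axioms are confirmed in this way, the conclusion that $Cl^\diamond$ is a fuzzy Kuratowski closure operator follows at once, and this is exactly what legitimises calling the family of its closed sets a fuzzy topology in the sequel.
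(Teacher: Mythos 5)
Your proposal is correct and matches the paper's own proof, which likewise just invokes Theorem~\ref{thm2} to confirm the axioms of Definition~\ref{Kuratowski}; your explicit matching of each axiom to the corresponding clause (and the remark that monotonicity is auxiliary) is merely a more detailed rendering of the same one-line argument.
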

\begin{proof}
Theorem \ref{thm2} guarantees that $Cl^\diamond$ satisfies all the postulates in Definition \ref{Kuratowski}. Thus, $Cl^\diamond$ is a Kuratowski's fuzzy closure operator.
\end{proof}

\begin{theorem}\label{primaltopology}
	Let $(Y,\T,\mathcal{P})$ be a PFTS. Then the family $\T^\diamond=\{\mu\subseteq \mathbb{I}^Y: Cl^\diamond(\bar{\mu})=\bar{\mu}\}$ constitutes a fuzzy topology on $Y$.
\end{theorem}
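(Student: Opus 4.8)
The plan is to verify the three fuzzy-topology axioms of Definition \ref{top} for $\T^\diamond$ by dualizing the closure-operator axioms already secured in Theorem \ref{thm2}. Observe first that $\mu\in\T^\diamond$ means precisely that $\bar\mu$ is a fixed point of $Cl^\diamond$, i.e. that $\bar\mu$ is ``$Cl^\diamond$-closed''. So the whole proof reduces to showing that the family of $Cl^\diamond$-closed fuzzy sets contains $0_Y$ and $1_Y$, is stable under finite unions, and is stable under arbitrary intersections; translating back through complements then delivers the three axioms in the stated form. Throughout I will lean on the fuzzy De Morgan identities $\overline{\mu\cap\nu}=\bar\mu\cup\bar\nu$ and $\overline{\bigcup_s\mu_s}=\bigcap_s\bar\mu_s$, which are immediate from $\bar\mu(y)=1-\mu(y)$ and the sup/inf formulas for fuzzy union and intersection.

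For the trivial-element axiom I would note that $1_Y\in\T^\diamond$ is just $Cl^\diamond(0_Y)=0_Y$ (Theorem \ref{thm2}(i)), while $0_Y\in\T^\diamond$ follows because $Cl^\diamond(1_Y)=1_Y$, extensivity (Theorem \ref{thm2}(ii)) giving $1_Y\subseteq Cl^\diamond(1_Y)$ and the ambient bound $Cl^\diamond(1_Y)\subseteq 1_Y$ giving the reverse. For finite intersections, given $\mu_1,\mu_2\in\T^\diamond$ I would rewrite $\overline{\mu_1\cap\mu_2}=\bar\mu_1\cup\bar\mu_2$ and apply the additivity axiom (Theorem \ref{thm2}(iv)) to get $Cl^\diamond(\bar\mu_1\cup\bar\mu_2)=Cl^\diamond(\bar\mu_1)\cup Cl^\diamond(\bar\mu_2)=\bar\mu_1\cup\bar\mu_2$, so $\mu_1\cap\mu_2\in\T^\diamond$; the $n$-fold case is the same by induction.

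The one step that is not entirely automatic---and which I would flag as the main (though mild) obstacle---is the arbitrary-union axiom, because $Cl^\diamond$ is only guaranteed to commute with \emph{finite} unions and so cannot simply be pushed through an infinite family. For $\{\mu_s:s\in S\}\subseteq\T^\diamond$ the task, after De Morgan, is to prove $Cl^\diamond(\bigcap_s\bar\mu_s)=\bigcap_s\bar\mu_s$. The inclusion $\supseteq$ is extensivity. For $\subseteq$ I would not invoke additivity at all; instead I fix an arbitrary $s_0\in S$, use $\bigcap_s\bar\mu_s\subseteq\bar\mu_{s_0}$ together with monotonicity (Theorem \ref{thm2}(iii)) to obtain $Cl^\diamond(\bigcap_s\bar\mu_s)\subseteq Cl^\diamond(\bar\mu_{s_0})=\bar\mu_{s_0}$, and then intersect over all $s_0\in S$. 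This yields $Cl^\diamond(\bigcap_s\bar\mu_s)\subseteq\bigcap_s\bar\mu_s$ and hence the equality, so $\bigcup_s\mu_s\in\T^\diamond$. With all three axioms in hand, $\T^\diamond$ is a fuzzy topology on $Y$.
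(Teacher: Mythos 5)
Your proposal is correct and follows essentially the same route as the paper's proof: the trivial elements via Theorem \ref{thm2}(i)--(ii), finite intersections via De Morgan and additivity (Theorem \ref{thm2}(iv)), and arbitrary unions by applying monotonicity (Theorem \ref{thm2}(iii)) to $\bigcap_{s}\bar{\mu}_s\subseteq\bar{\mu}_{s_0}$ for each $s_0$ and closing with extensivity. The step you flag as non-automatic is handled in the paper exactly as you handle it.
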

\begin{proof}
Since $Cl^\diamond(0_Y)=0_Y$ by Theorem \ref{thm2} (i), so $1_Y\in\T^\diamond$. By Theorem \ref{thm2} (ii), $1_Y\subseteq Cl^\diamond(1_Y)$. Then $0_Y\in\T^\diamond$.

Let $\mu,\nu\in\T^\diamond$. Then $Cl^\diamond(\bar{\mu})=\bar{\mu}$ and $Cl^\diamond(\bar{\nu})=\bar{\nu}$. By Theorem \ref{thm2} (iv), $Cl^\diamond(\overline{\mu\cap\nu})=Cl^\diamond(\bar{\mu}\cup\bar{\nu})=Cl^\diamond(\bar{\mu})\cup Cl^\diamond(\bar{\nu})=\bar{\mu}\cup\bar{\nu}=\overline{\mu\cap\nu}$. This implies that  $\mu\cap\nu\in\T^\diamond$. 

Let $\{\mu_s:s\in S\}\subseteq\T^\diamond$. Then $Cl^\diamond(\overline{\mu_s})=\overline{\mu_s}$ for all $s$. Set $\sigma=\bigcap_{s\in S}\overline{\mu_s}$. By Theorem \ref{thm2} (iii), $Cl^\diamond(\sigma)\subseteq Cl^\diamond(\overline{\mu_s})=\overline{\mu_s}$ for all $s$. This implies that $Cl^\diamond(\sigma)\subseteq \bigcap_{s\in S}\overline{\mu_s}=\sigma$. By Theorem \ref{thm2} (ii), $\sigma\subseteq Cl^\diamond(\sigma)$. Therefore, $Cl^\diamond(\bigcap_{s\in S}\overline{\mu_s})=\bigcap_{s\in S}\overline{\mu_s}$. But $\overline{\bigcap_{s\in S}\overline{\mu_s}}=\bigcup_{s\in S}\mu_s$ implies $\bigcup_{s\in S}\mu_s\in\T^\diamond$. Thus, $\T^\diamond$ is a fuzzy topology on $Y$.
\end{proof}

\begin{definition}
	We call a fuzzy topology $\T^\diamond$ produced by the above theorem the primal fuzzy topology. If it is necessary, we write $\T^\diamond_{\mathcal{P}}$ instead of $\T^\diamond$. %We call $(Y,\T^\diamond,\Delta,\mathcal{P})$ a primal primal fuzzy topological space.
\end{definition}

\begin{theorem}\label{base}
Let $(Y,\T,\mathcal{P})$ be a PFTS. Then the family $$\mathcal{B}_{\mathcal{P}}=\{\mu-\bar{\lambda}: \mu\in \T\text{ and }\lambda\not\in\mathcal{P}\}$$ is a fuzzy base for the primal fuzzy topology $\T^\diamond$ on $Y$.
\end{theorem}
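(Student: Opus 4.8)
The plan is to verify directly the two requirements in Definition \ref{fuzzybase} for $\mathcal{B}_{\mathcal{P}}$ relative to the primal fuzzy topology $\T^\diamond$: first, that $\mathcal{B}_{\mathcal{P}}\subseteq\T^\diamond$, so that each candidate basic set is $\T^\diamond$-open; and second, the refinement property that for every fuzzy point $y_t$ and every $\omega\in\Q_{\T^\diamond}(y_t)$ there is $\beta\in\mathcal{B}_{\mathcal{P}}$ with $y_t\prec\beta\subseteq\omega$. Two {\L}ukasiewicz identities will do the heavy lifting, so I would record them at the outset: writing a basic set as $\mu-\bar\lambda=\mu\odot\lambda$, one has $\overline{\mu\odot\lambda}=\bar\mu\oplus\bar\lambda$ (the De Morgan law behind Remark \ref{op-remark}(iii)). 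I would also restate the two translations used constantly: $\sigma\in\T^\diamond$ is equivalent to $(\bar\sigma)^\diamond\subseteq\bar\sigma$ (since $Cl^\diamond(\bar\sigma)=\bar\sigma\cup(\bar\sigma)^\diamond$), and $y_t\prec\sigma$ is equivalent to $y_t\notin\bar\sigma$ by Lemma \ref{quasi}(ii).

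For the first requirement, fix $\beta=\mu-\bar\lambda$ with $\mu\in\T$ and $\lambda\notin\mathcal{P}$; I must show $(\bar\beta)^\diamond\subseteq\bar\beta$. I would argue contrapositively at the level of fuzzy points: if $y_t\notin\bar\beta$, i.e.\ $y_t\prec\beta$, then since $\mu\odot\lambda\subseteq\mu$ we get $y_t\prec\mu$, so $\mu\in\Q(y_t)$. The crux is the pointwise computation $\beta\oplus\bar\mu=\lambda\cup\bar\mu$ (checked by splitting on whether $\mu(y)+\lambda(y)\geq 1$). Because $\lambda\subseteq\lambda\cup\bar\mu$ and $\lambda\notin\mathcal{P}$, the upward-closure clause Theorem \ref{pp1}(ii) gives $\beta\oplus\bar\mu=\lambda\cup\bar\mu\notin\mathcal{P}$. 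Recalling that $(\bar\beta)^\diamond=\{z_s:\beta\oplus\bar\rho\in\mathcal{P}\text{ for all }\rho\in\Q(z_s)\}$ (using $\overline{\bar\beta}=\beta$), the witness $\rho=\mu$ shows $y_t\notin(\bar\beta)^\diamond$. Hence $(\bar\beta)^\diamond\subseteq\bar\beta$ and $\beta\in\T^\diamond$.

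For the refinement property, let $\omega\in\T^\diamond$ with $y_t\prec\omega$. Then $y_t\notin\bar\omega$, and since $\omega\in\T^\diamond$ forces $(\bar\omega)^\diamond\subseteq\bar\omega$, also $y_t\notin(\bar\omega)^\diamond$. Unwinding the definition of $(\cdot)^\diamond$ (again with $\overline{\bar\omega}=\omega$) yields some $\nu\in\Q(y_t)$ with $\omega\oplus\bar\nu\notin\mathcal{P}$. I would then set $\lambda:=\omega\oplus\bar\nu\notin\mathcal{P}$ and $\beta:=\nu-\bar\lambda\in\mathcal{B}_{\mathcal{P}}$, which is legitimate since $\nu\in\T$. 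The second key identity is $\beta=\nu\odot(\omega\oplus\bar\nu)=\nu\cap\omega$, verified by splitting into the cases $\omega(y)\leq\nu(y)$ and $\omega(y)>\nu(y)$. From $\beta=\nu\cap\omega$ the conclusion is immediate: $\beta\subseteq\omega$, while $y_t\prec\nu$ and $y_t\prec\omega$ give $t+\min(\nu(y),\omega(y))>1$, i.e.\ $y_t\prec\beta$. This exhibits the required basic element.

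The main obstacle is computational but genuinely load-bearing: establishing the two {\L}ukasiewicz identities $\beta\oplus\bar\mu=\lambda\cup\bar\mu$ and $\nu\odot(\omega\oplus\bar\nu)=\nu\cap\omega$, which replace the Boolean fact ``$U-I$ is open because $U-I=U\cap(X\setminus I)$'' that drives the classical crisp argument; here $\mu\odot\lambda$ does \emph{not} split as $\mu\cap\lambda$, so one cannot merely invoke intersections of $\T^\diamond$-open sets and must work pointwise. The secondary care-point is the bookkeeping between the fuzzy-set formulation of $\T^\diamond$-openness and the fuzzy-point/quasi-coincidence formulation of $(\cdot)^\diamond$ and of the base axiom, for which Lemma \ref{quasi} and the quasi-coincidence description of $(\cdot)^\diamond$ must be applied consistently.
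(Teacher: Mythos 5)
Your proposal is correct and follows essentially the same route as the paper's proof: both parts verify $\mathcal{B}_{\mathcal{P}}\subseteq\T^\diamond$ by showing that $y_t\notin\bar{\beta}$ forces $\mu\in\Q(y_t)$ and $\beta\oplus\bar{\mu}\notin\P$ (the paper phrases this as a contradiction, you as a contrapositive), and both obtain the refinement property from $y_t\notin(\bar{\omega})^\diamond$ via the basic set $\nu-\overline{(\omega\oplus\bar{\nu})}$. Your explicit {\L}ukasiewicz identities $\beta\oplus\bar{\mu}=\lambda\cup\bar{\mu}$ and $\nu\odot(\omega\oplus\bar{\nu})=\nu\cap\omega$ are just a cleaner record of the paper's pointwise case analysis; they also make the final inclusion $\beta\subseteq\omega$ immediate, a step the paper asserts without detailed verification, and they tidy the sign bookkeeping that the paper handles somewhat loosely.
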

\begin{proof}
We first need to check that $\mathcal{B}_{\mathcal{P}}\subseteq\T^\diamond$. If $\beta\in\mathcal{B}_{\mathcal{P}}$, then $\beta=\mu-\bar{\lambda}$ for some $\mu\in\T$ and $\lambda\not\in\mathcal{P}$. We need to show that $Cl^\diamond(\bar{\beta})=\bar{\beta}$. It is enough to prove that $(\bar{\beta})^\diamond=\bar{\beta}$. Suppose there exists $y_t\in\F_t(Y)$ such that $y_t\in(\bar{\beta})^\diamond$ but $y_t\notin(\bar{\beta})$. The first statement implies that for each $\omega\in\Q(y_t)$, we have $\overline{(\bar{\beta})}\oplus\bar{\omega}\in\P$ and so $(1_Y-(1_Y-\beta))+(1_Y-\omega)\in\P$. Therefore, 
\begin{equation}
1_Y+\beta-\omega\in\P. \tag{$\clubsuit$}
\end{equation} 
That is, 
\begin{align*}
1_Y+\beta-\omega&=1_Y+(\mu-(1_Y-\lambda))-\omega\\
&=1_Y+\mu-1_Y+\lambda-\omega\\
&=\mu-\lambda-\omega\in\P.\tag{$\spadesuit$}
\end{align*} 
On the other hand, from $y_t\notin(\bar{\beta})$, we obtain that
\begin{align*}
 t&>1-\beta(y)\\
 &=1-(\mu-(1-\lambda))(y)\\
 &=2-\mu(y)-\lambda(y).
\end{align*}
Therefore, $t+\mu(y)>2-\lambda(y)\geq 1$. This proves that 
\begin{align*}
\mu\in\Q_\T(y_t).\tag{$\bigstar$}
\end{align*}

Applying $(\spadesuit)$ to $\mu$, we have
$\lambda\leq\mu+\lambda-\mu\in\P$ implies $\lambda\in\P$, a contradiction. So $(\bar{\beta})^\diamond=\bar{\beta}$. Thus, $\mathcal{B}_{\mathcal{P}}\subseteq\T^\diamond$.

Let $y_t\in\F_t(Y)$ and let $\mu\in\Q_{\T^\diamond}(y_t)$. Then there exists $\beta\in\T^\diamond$ such that $y_t\prec\beta\subseteq\mu$. Since $\beta\in\T^\diamond$, so $\bar{\beta}$ is fuzzy $\T^\diamond$-closed and so $Cl^\diamond(\bar{\beta})=\bar{\beta}$. This means that $(\bar{\beta})^\diamond\subseteq\bar{\beta}$. Therefore, $y_t\notin(\bar{\beta})^\diamond$. This leads that there exists $\nu\in\Q_\T(y_t)\cdots(\blacklozenge)$ such that $\beta\oplus\bar{\nu}\notin\P$. By ($\clubsuit$), we have $1-\beta+\nu\notin\P$. 

Now, we need to check that $y_t\prec(\nu-\overline{(1_Y-\beta+\nu)})$. Since $y_t\prec\beta$ and $y_t\prec\nu$, then $t+\beta(y)>1$ and  $t+\nu(y)>1$. Considering, 
if $\mu(y)\leq\beta(y)$, we obtain that
\begin{align*}
t+\nu(y)-\overline{(1-\beta(y)+\nu(y))}&=t+\nu(y)-(1-(1-\beta(y)+\nu(y)))\\
&=t+\nu(y)-(-\beta(y)+\nu(y))\\
&=t+\beta(y)>1.
\end{align*}
If $\mu(y)>\beta(y)$, we also obtain that $t+\nu(y)-\overline{(1-\beta(y)+\nu(y))}=t+\beta(y)>1$. Since $\nu\in\Q_\T(y_t)$ by ($\blacklozenge$), and we have shown that for each $y_t\in\F_t(y)$ and for each $\mu\in\Q_{\T^\diamond}(y_t)$, there $(\nu-\overline{(1_Y-\beta+\nu)})\in\mathcal{B}_\P$ such that $y_t\prec(\nu-\overline{(1_Y-\beta+\nu)})\subseteq\mu$. By Definition \ref{fuzzybase}, $\mathcal{B}_\P$ is a fuzzy base for $\T^\diamond$.
\end{proof}

\begin{theorem}\label{finer}
Let $(Y,\T,\mathcal{P})$ be a PFTS. Then a primal fuzzy topology $\T^\diamond$ is finer than a fuzzy topology $\T$.
\end{theorem}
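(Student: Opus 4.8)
The plan is to unwind what ``finer'' means here: since both $\T$ and $\T^\diamond$ are fuzzy topologies on $Y$, the claim that $\T^\diamond$ is finer than $\T$ is exactly the inclusion $\T\subseteq\T^\diamond$. So I would fix an arbitrary $\mu\in\T$ and aim to show $\mu\in\T^\diamond$. By the description of the primal fuzzy topology in Theorem \ref{primaltopology}, membership $\mu\in\T^\diamond$ is equivalent to $Cl^\diamond(\bar{\mu})=\bar{\mu}$, so the whole task reduces to verifying this single closure identity for the complement of $\mu$.

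To handle that identity, I would expand the operator $Cl^\diamond(\bar{\mu})=\bar{\mu}\cup(\bar{\mu})^\diamond$ by definition. The inclusion $\bar{\mu}\subseteq Cl^\diamond(\bar{\mu})$ always holds (Theorem \ref{thm2} (ii)), so only the reverse inclusion needs work, and since $Cl^\diamond(\bar{\mu})=\bar{\mu}\cup(\bar{\mu})^\diamond$, it is enough to prove $(\bar{\mu})^\diamond\subseteq\bar{\mu}$. Here the key tool is Theorem \ref{thm1} (ii), which supplies $(\bar{\mu})^\diamond\subseteq Cl(\bar{\mu})$ in terms of the ordinary fuzzy closure $Cl$. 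Because $\mu$ is fuzzy $\T$-open, its complement $\bar{\mu}$ is fuzzy $\T$-closed, and therefore $Cl(\bar{\mu})=\bar{\mu}$. Chaining these gives $(\bar{\mu})^\diamond\subseteq Cl(\bar{\mu})=\bar{\mu}$, whence $Cl^\diamond(\bar{\mu})=\bar{\mu}$ and $\mu\in\T^\diamond$, as desired.

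I do not expect a genuine obstacle in this argument; the proof is essentially a one-line consequence of the already-established inclusion $\lambda^\diamond\subseteq Cl(\lambda)$. The only points requiring a little care are bookkeeping rather than mathematical difficulty: first, correctly translating the statement ``$\T^\diamond$ is finer than $\T$'' into the set-inclusion $\T\subseteq\T^\diamond$; and second, passing cleanly through the complement-based definition of $\T^\diamond$, remembering that openness of $\mu$ in $(Y,\T)$ is precisely what makes $\bar{\mu}$ closed so that $Cl(\bar{\mu})=\bar{\mu}$. Once these translations are in place, Theorem \ref{thm1} (ii) closes the argument immediately.
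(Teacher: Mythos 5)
Your proof is correct, but it takes a genuinely different route from the paper. The paper proves this as an immediate corollary of Theorem \ref{base}: since $1_Y\notin\mathcal{P}$, every $\mu\in\T$ can be written as $\mu-\overline{1_Y}=\mu-0_Y=\mu$, so $\T\subseteq\mathcal{B}_{\mathcal{P}}\subseteq\T^\diamond$, where the inclusion $\mathcal{B}_{\mathcal{P}}\subseteq\T^\diamond$ was the hard part of the base theorem. You instead argue directly from first principles: $\mu\in\T^\diamond$ iff $Cl^\diamond(\bar{\mu})=\bar{\mu}$, which reduces to $(\bar{\mu})^\diamond\subseteq\bar{\mu}$, and this follows from Theorem \ref{thm1} (ii) together with the fact that the Pu--Liu closure fixes fuzzy closed sets, i.e.\ $(\bar{\mu})^\diamond\subseteq Cl(\bar{\mu})=\bar{\mu}$. (The paper itself invokes exactly this combination, closedness plus Theorem \ref{thm1} (ii) giving $\mu^\diamond\subseteq Cl(\mu)=\mu$, in the proof of Theorem \ref{closed=union}, so your key step is fully consistent with the paper's conventions.) The trade-off: your argument is more elementary and self-contained, needing only Theorem \ref{thm1} (ii) and not the machinery of fuzzy bases, so it would survive even if Theorem \ref{base} were absent or reordered; the paper's proof is shorter on the page because the base theorem has already been paid for, and it has the side benefit of exhibiting concretely how $\T$ sits inside the generating family $\mathcal{B}_{\mathcal{P}}$. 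Both are complete proofs of $\T\subseteq\T^\diamond$.
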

\begin{proof}
It follows from Theorem \ref{base} that $\T\subseteq\mathcal{B}_\P\subseteq\T^\diamond$. Thus, $\T\subseteq\T^\diamond$.
\end{proof}

The following illustrations demonstrate that primal fuzzy topologies are natural examples of fuzzy topologies on a universal set.

%\begin{example}\label{confinite}Let $(Y,\T)$ be the indiscrete fuzzy topology on $Y$ and let $\P$ be the fuzzy primal of finite fuzzy sets in $Y$. By the construction given in Example \ref{ex2}, if $\mu\in\P$, then $\mu^\diamond=0_Y$ and so $Cl^\diamond(\mu)=\mu$. If $\mu\notin\P$, then $\mu^\diamond=1_Y$ and so $Cl^\diamond(1_Y)=1_Y$. This means that all finite fuzzy sets are $\T^\diamond$-closed alongside with $1_Y$. Thus, $\T^\diamond$ is the so called finite complement (cofinite) fuzzy topology on $Y$. \end{example}
\begin{example}\label{exam-I=Finite-included}
Let $(Y, \T)$ be a fuzzy topological space, where $\T=\{0_Y,1_Y\}$, and let $y_{t_0}\in\F_t(Y)$. Suppose the fuzzy primal $\P=\{\lambda\in\mathbb{I}^Y, y_{t_0}\notin\lambda\}$. By the construction given in Example \ref{ex2}, if $\bar{\mu}\notin\P$, then $\mu^\diamond=0_Y$ and so $Cl^\diamond(\mu)=\mu$. If $\bar{\mu}\in\P$, then $\mu^\diamond=1_Y$ and so $Cl^\diamond(1_Y)=1_Y$. Therefore, each fuzzy set excluding $y_{t_0}$ is $\T^\diamond$-closed together with $1_Y$. Hence, $\T^\diamond=\{\lambda\in\mathbb{I}^Y, y_{t_0}\in\lambda\}\cup\{0_Y\}$ (This is called an included fuzzy point topology).
\end{example}

\begin{example}
	For any fuzzy topological space $(Y,\T)$, if the fuzzy primal $\mathcal{P}=\mathbb{I}^Y\setminus\{1_Y\}$, then $\T= \T^\diamond$. Let $\beta$ be any basic $\T^\diamond$-open. Then $\beta=\mu-\bar{\lambda}$ for some $\mu\in\T$ and $\lambda\notin\P$. Therefore, we have $1_Y\notin\P$ and hence, $\beta=\mu-\overline{1_Y}=\mu-0_Y=\mu\in\T$. Thus, $\T^\diamond\subseteq\T$ and $\T^\diamond\supseteq\T$ (by Theorem \ref{finer}). Hence, $\T^\diamond=\T$.
\end{example}

\begin{theorem}
Let $(Y,\T,\mathcal{P})$ be a PFTS. The following statements hold:
\begin{enumerate}[(i)]
	\item If $\bar{\lambda}\notin\P$, then $\lambda$ is fuzzy $\T^\diamond$-closed in $Y$.
	\item If $\lambda\in\mathbb{I}^Y$, then $\lambda^\diamond$ is fuzzy $\T^\diamond$-closed in $Y$.
\end{enumerate}
\end{theorem}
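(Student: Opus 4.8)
The plan is to reduce both assertions to the characterization of $\T^\diamond$-closedness in terms of the operator $Cl^\diamond$, and then invoke the already-established properties of $(\cdot)^\diamond$ collected in Theorem \ref{thm1}. First I would record the key translation: a fuzzy set $\lambda$ is fuzzy $\T^\diamond$-closed precisely when its complement $\bar\lambda$ is $\T^\diamond$-open, i.e. $\bar\lambda\in\T^\diamond$. By the defining description $\T^\diamond=\{\mu: Cl^\diamond(\bar\mu)=\bar\mu\}$ from Theorem \ref{primaltopology}, this amounts to $Cl^\diamond(\overline{\bar\lambda})=\overline{\bar\lambda}$, that is $Cl^\diamond(\lambda)=\lambda$. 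Since $Cl^\diamond(\lambda)=\lambda\cup\lambda^\diamond$ by definition, the closedness of $\lambda$ is therefore equivalent to the single containment $\lambda^\diamond\subseteq\lambda$. This reformulation does the heavy lifting and turns both parts into one-line consequences.

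For part (i), I would argue as follows. Assuming $\bar\lambda\notin\P$, Theorem \ref{thm1}(iii) gives $\lambda^\diamond=0_Y$, whence $\lambda^\diamond=0_Y\subseteq\lambda$ trivially. By the equivalence just recorded, $Cl^\diamond(\lambda)=\lambda$, and so $\lambda$ is fuzzy $\T^\diamond$-closed.

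For part (ii), I would apply the same equivalence to the set $\lambda^\diamond$ in place of $\lambda$: it suffices to verify $(\lambda^\diamond)^\diamond\subseteq\lambda^\diamond$. But this is exactly the idempotency-type bound of Theorem \ref{thm1}(v). Hence $Cl^\diamond(\lambda^\diamond)=\lambda^\diamond$, and $\lambda^\diamond$ is fuzzy $\T^\diamond$-closed for every $\lambda\in\mathbb{I}^Y$.

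In short, there is no genuine obstacle once the definition of $\T^\diamond$ is unwound; the only point requiring a little care is the double-complement bookkeeping in translating \emph{``$\lambda$ is $\T^\diamond$-closed''} into the clean condition $\lambda^\diamond\subseteq\lambda$, after which parts (iii) and (v) of Theorem \ref{thm1} dispatch the two cases immediately.
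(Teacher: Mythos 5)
Your proposal is correct and follows essentially the same route as the paper's own proof: both parts reduce $\T^\diamond$-closedness to $Cl^\diamond(\lambda)=\lambda$, then invoke Theorem \ref{thm1}(iii) for part (i) and Theorem \ref{thm1}(v) for part (ii). Your explicit unwinding of the double complement in the definition of $\T^\diamond$ is a welcome clarification, but it changes nothing in substance.
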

\begin{proof}
\begin{enumerate}[(i)]
	\item Let $\bar{\lambda}\notin\P$. By Theorem \ref{thm1} (iii), $\lambda^\diamond=0_Y$. Therefore, $Cl^\diamond(\lambda)=\lambda\cup\lambda^\diamond=\lambda$ and hence $\lambda$ is a fuzzy $\T^\diamond$-closed set in $Y$.
	
	\item Let $\lambda\in\mathbb{I}^Y$. From Theorem \ref{thm1} (v), we obtain that $Cl^\diamond(\lambda^\diamond)=(\lambda^\diamond)^\diamond\cup\lambda^\diamond=\lambda^\diamond$ leads $\lambda^\diamond$ is a fuzzy $\T^\diamond$-closed set in $Y$.
\end{enumerate}
\end{proof}

\begin{theorem}\label{zz}
Let $(Y,\T,\mathcal{P})$ and $(Y,\T,\mathcal{G})$ be two PFTSs such that $\mathcal{P}\subseteq\mathcal{G}$. Then $\T^\diamond_{\mathcal{G}}\subseteq \T^\diamond_{\mathcal{P}}$.
\end{theorem}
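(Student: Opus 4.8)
The plan is to reduce the statement entirely to the monotonicity of the operator $(\cdot)^\diamond$ in its primal argument, which is already recorded in Theorem \ref{twoprimal}(i), and then to transfer this through the definition of $Cl^\diamond$ and the closed-set description of the primal fuzzy topology. The key conceptual point is that passing to a \emph{larger} primal $\mathcal{G}\supseteq\mathcal{P}$ enlarges the derived operator but \emph{shrinks} the resulting topology, so the inclusion of topologies reverses the inclusion of primals.

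First I would invoke Theorem \ref{twoprimal}(i): since $\mathcal{P}\subseteq\mathcal{G}$, we have $\lambda^\diamond_{\mathcal{P}}\subseteq\lambda^\diamond_{\mathcal{G}}$ for every $\lambda\in\mathbb{I}^Y$. Next I would propagate this to the associated closure operators. By the definition $Cl^\diamond_{\mathcal{P}}(\lambda)=\lambda\cup\lambda^\diamond_{\mathcal{P}}$ and $Cl^\diamond_{\mathcal{G}}(\lambda)=\lambda\cup\lambda^\diamond_{\mathcal{G}}$, the previous inclusion yields
\[
Cl^\diamond_{\mathcal{P}}(\lambda)=\lambda\cup\lambda^\diamond_{\mathcal{P}}\subseteq\lambda\cup\lambda^\diamond_{\mathcal{G}}=Cl^\diamond_{\mathcal{G}}(\lambda)
\]
for each $\lambda\in\mathbb{I}^Y$. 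This is the single inequality on which everything rests.

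Then I would recall from Theorem \ref{primaltopology} that the primal fuzzy topology is characterized through closed sets: a fuzzy set $\mu$ belongs to $\T^\diamond$ precisely when $Cl^\diamond(\bar{\mu})=\bar{\mu}$. To prove $\T^\diamond_{\mathcal{G}}\subseteq\T^\diamond_{\mathcal{P}}$, I would take an arbitrary $\mu\in\T^\diamond_{\mathcal{G}}$, so that $Cl^\diamond_{\mathcal{G}}(\bar{\mu})=\bar{\mu}$. Combining the inclusion from the previous paragraph with the expansiveness property $\bar{\mu}\subseteq Cl^\diamond_{\mathcal{P}}(\bar{\mu})$ from Theorem \ref{thm2}(ii) gives the sandwich
\[
\bar{\mu}\subseteq Cl^\diamond_{\mathcal{P}}(\bar{\mu})\subseteq Cl^\diamond_{\mathcal{G}}(\bar{\mu})=\bar{\mu},
\]
whence $Cl^\diamond_{\mathcal{P}}(\bar{\mu})=\bar{\mu}$ and therefore $\mu\in\T^\diamond_{\mathcal{P}}$.

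I do not expect any genuine obstacle here: the argument is short and structural, and no delicate computation with the {\L}ukasiewicz operations is needed because all of that work is already absorbed into Theorem \ref{twoprimal}(i). The only point requiring attention is bookkeeping of the inclusion directions—ensuring that the enlargement of the primal correctly produces the reverse inclusion of the topologies—and correctly citing the closed-set characterization of $\T^\diamond$ rather than attempting to compare open sets directly.
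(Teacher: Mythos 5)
Your proposal is correct and follows essentially the same route as the paper: both proofs take $\mu\in\T^\diamond_{\mathcal{G}}$, invoke Theorem \ref{twoprimal}(i) to get $(\bar{\mu})^\diamond_{\mathcal{P}}\subseteq(\bar{\mu})^\diamond_{\mathcal{G}}\subseteq\bar{\mu}$, and conclude $Cl^\diamond_{\mathcal{P}}(\bar{\mu})=\bar{\mu}$ via the closed-set characterization of $\T^\diamond$. Your intermediate step of lifting the monotonicity from $(\cdot)^\diamond$ to $Cl^\diamond$ before sandwiching is only a cosmetic repackaging of the paper's argument, not a different method.
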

\begin{proof}
If $\mu\in\T^\diamond_{\mathcal{G}}$, then  $Cl^\diamond_\mathcal{G}(\bar{\mu})=\bar{\mu}\cup(\bar{\mu})_\mathcal{G}^\diamond$ implies $(\bar{\mu})_\mathcal{G}^\diamond\subseteq\bar{\mu}$. Since $\mathcal{P}\subseteq\mathcal{G}$, by Theorem \ref{twoprimal}, $(\bar{\mu})_\mathcal{P}^\diamond\subseteq(\bar{\mu})_\mathcal{G}^\diamond\subseteq\bar{\mu}$. Therefore, $Cl^\diamond_\mathcal{P}(\bar{\mu})=\bar{\mu}$ and so $\bar{\mu}$ is a fuzzy $\T^\diamond_\P$-closed set. Hence, $\mu\in\T^\diamond_{\mathcal{P}}$ and thus, $\T^\diamond_{\mathcal{G}}\subseteq \T^\diamond_{\mathcal{P}}$.
\end{proof}

\begin{theorem}
Let $(Y,\T,\mathcal{P})$ and $(Y,\T,\mathcal{G})$ be two PFTSs. Then $\ \T^\diamond_{\P\cup\mathcal{G}}= \T^\diamond_{\mathcal{P}}\bigcap\T^\diamond_{\mathcal{G}}$.
\end{theorem}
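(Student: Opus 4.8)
The plan is to prove the two inclusions separately, leaning entirely on the two previously established facts: the order-reversing behaviour of the primal fuzzy topology in its primal (Theorem \ref{zz}) and the additivity of the diamond operator over a union of primals (Theorem \ref{twoprimal}(ii)).

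For the inclusion $\T^\diamond_{\P\cup\mathcal{G}}\subseteq\T^\diamond_{\P}\cap\T^\diamond_{\mathcal{G}}$, I would simply observe that $\P\subseteq\P\cup\mathcal{G}$ and $\mathcal{G}\subseteq\P\cup\mathcal{G}$. Applying Theorem \ref{zz} to each of these containments yields $\T^\diamond_{\P\cup\mathcal{G}}\subseteq\T^\diamond_{\P}$ and $\T^\diamond_{\P\cup\mathcal{G}}\subseteq\T^\diamond_{\mathcal{G}}$, and intersecting the two gives the desired inclusion. This half is immediate.

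For the reverse inclusion, I would take $\mu\in\T^\diamond_{\P}\cap\T^\diamond_{\mathcal{G}}$ and first record the membership criterion: since $Cl^\diamond(\nu)=\nu\cup\nu^\diamond$ always satisfies $\nu\subseteq Cl^\diamond(\nu)$, a set $\mu$ lies in $\T^\diamond_{\P}$ precisely when $(\bar{\mu})^\diamond_{\P}\subseteq\bar{\mu}$, and likewise for $\mathcal{G}$. Thus from $\mu\in\T^\diamond_{\P}\cap\T^\diamond_{\mathcal{G}}$ I get both $(\bar{\mu})^\diamond_{\P}\subseteq\bar{\mu}$ and $(\bar{\mu})^\diamond_{\mathcal{G}}\subseteq\bar{\mu}$. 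Now Theorem \ref{twoprimal}(ii) rewrites $(\bar{\mu})^\diamond_{\P\cup\mathcal{G}}=(\bar{\mu})^\diamond_{\P}\cup(\bar{\mu})^\diamond_{\mathcal{G}}$, and combining the two inclusions gives $(\bar{\mu})^\diamond_{\P\cup\mathcal{G}}\subseteq\bar{\mu}$. Hence $Cl^\diamond_{\P\cup\mathcal{G}}(\bar{\mu})=\bar{\mu}\cup(\bar{\mu})^\diamond_{\P\cup\mathcal{G}}=\bar{\mu}$, so $\mu\in\T^\diamond_{\P\cup\mathcal{G}}$, which finishes the reverse inclusion and hence the equality.

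I do not anticipate a genuine obstacle here: both directions reduce to the setwise identity of Theorem \ref{twoprimal}(ii) together with the order-reversal of Theorem \ref{zz}. The only step that warrants a moment of care is translating $\T^\diamond$-openness of $\mu$ into the inclusion $(\bar{\mu})^\diamond\subseteq\bar{\mu}$ for the appropriate primal, which is exactly the definition of $\T^\diamond$ via $Cl^\diamond(\nu)=\nu\cup\nu^\diamond$ combined with the always-valid containment $\nu\subseteq Cl^\diamond(\nu)$ from Theorem \ref{thm2}(ii).
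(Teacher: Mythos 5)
Your proposal is correct and follows essentially the same route as the paper's own proof: the forward inclusion via Theorem \ref{zz} applied to $\P\subseteq\P\cup\mathcal{G}$ and $\mathcal{G}\subseteq\P\cup\mathcal{G}$, and the reverse inclusion by translating membership in $\T^\diamond_{\P}\cap\T^\diamond_{\mathcal{G}}$ into $(\bar{\mu})^\diamond_{\P}\subseteq\bar{\mu}$ and $(\bar{\mu})^\diamond_{\mathcal{G}}\subseteq\bar{\mu}$ and invoking Theorem \ref{twoprimal}(ii). Your explicit justification that $\T^\diamond$-openness is equivalent to $(\bar{\mu})^\diamond\subseteq\bar{\mu}$, via $\nu\subseteq Cl^\diamond(\nu)$, is a small point the paper leaves implicit, but the argument is the same.
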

\begin{proof}
Since $\P\subseteq\P\cup\mathcal{G}$ and $\mathcal{G}\subseteq\P\cup\mathcal{G}$, by Theorem \ref{zz}, $\T^\diamond_{\P\cup\mathcal{G}}\subseteq\T^\diamond_{\P}$ and $\T^\diamond_{\P\cup\mathcal{G}}\subseteq\T^\diamond_{\mathcal{G}}$. Hence, $\T^\diamond_{\P\cup\mathcal{G}}\subseteq \T^\diamond_{\mathcal{P}}\bigcap\T^\diamond_{\mathcal{G}}$.

Conversely, if $\mu\in\T^\diamond_{\mathcal{P}}\bigcap\T^\diamond_{\mathcal{G}}$. Then $\mu\in\T^\diamond_{\mathcal{G}}$ and $\mu\in\T^\diamond_{\mathcal{G}}$. Therefore, $Cl^\diamond_\P(\bar{\mu})=\bar{\mu}$, $Cl^\diamond_\mathcal{G}(\bar{\mu})=\bar{\mu}$ implies $(\bar{\mu})^\diamond_\mathcal{P}\subseteq\bar{\mu}$ and $(\bar{\mu})^\diamond_\mathcal{G}\subseteq\bar{\mu}$. So $(\bar{\mu})^\diamond_\mathcal{P}\bigcup(\bar{\mu})^\diamond_\mathcal{G}\subseteq\bar{\mu}$. By Theorem \ref{twoprimal} (ii), $(\bar{\mu})^\diamond_{\P\cup\mathcal{G}}\subseteq\bar{\mu}$ implies $\bar{\mu}$ is fuzzy $\T^\diamond_{\P\cup\mathcal{G}}$-closed. Hence, $\mu\in\T^\diamond_{\P\cup\mathcal{G}}$ and consequently, $\T^\diamond_{\mathcal{P}}\bigcap\T^\diamond_{\mathcal{G}}\subseteq\T^\diamond_{\P\cup\mathcal{G}}$. Thus, $\T^\diamond_{\P\cup\mathcal{G}}= \T^\diamond_{\mathcal{P}}\bigcap\T^\diamond_{\mathcal{G}}$. 
\end{proof}

The following example demonstrates that $\mathcal{B}_\P$ in Theorem \ref{base} is not a fuzzy topology in general.
\begin{example}
Let $(Y,\tau)$ be an ordinary topological space. The family $\omega(\tau)$ of lower semicontinuous functions from $Y$ to $\mathbb{I}$ equipped with the standard topology is a fuzzy topology on $Y$ (see \cite{lowen1976fuzzy}). The fuzzy topology $\omega(\tau)$ is called the fuzzy topology generated by $\tau$. The base $\mathcal{B}_0$ of the standard topology includes of $(a,b)$ with $0\leq a<b\leq 1$, $[0,b)$ with $0<a\leq 1$, and $(a,1]$ with $0\leq a<1$. Let $A=\{\frac{1}{n}:n\in\mathbb{N},n\geq 2\}$. If $\P=\{\lambda:\lambda\text{ (finite)}\subseteq A\}$, by Lowen's technique (page 623-624, \cite{lowen1976fuzzy}), $\omega(\P)$ is a fuzzy primal on $Y$. Consequently, $\mathcal{B}_\P=\{\omega(\beta-\bar{\lambda}):\beta\in\mathcal{B}_0,\lambda\notin\P\}$ is a fuzzy base for $\omega(\tau)$. If $\lambda_n=\{\frac{1}{n+1},\frac{1}{n+2},\cdots\}$, then $\lambda_n\notin\P$ for all $n$. Therefore, each fuzzy set of form $\omega(\beta-\bar{\lambda}_n)\in\mathcal{B}_\P$, but $\bigcup_{n=2}^\infty\omega(\beta-\bar{\lambda}_n)\notin\mathcal{B}_\P$. 
\end{example}

The next section examines a natural condition between $\T$ and $\P$ that ensures $\mathcal{B}_\P$ is a fuzzy topology.
%%%%%%%%%%%%%%%%%%%%%%%%%%%%%%%%%%%%%%%%%%%%%
\section{Compatibility of a fuzzy primal with a fuzzy topology}
\begin{definition}
Let $(Y,\T,\mathcal{P})$ be a PFTS. Then $\T$ is called compatible with $\P$, denoted by $\T\approx\P$, if  for each $\lambda\in\mathbb{I}^Y$ and each $y_t\in\F_t(Y)$ with $y_t\in\lambda$, there exists $\mu\in\Q(y_t)$ such that $\bar{\lambda}\oplus\bar{\mu}\notin\P$ implies $\bar{\lambda}\notin\P$.
\end{definition}

\begin{definition}\label{square}
Let $\lambda\in\mathbb{I}^Y$. The fuzzy set $\lambda^\square$ is defined by $$\lambda^\square=\{y_t\in\F_t(Y): y_t\in\lambda \text{ and }y_t\notin\lambda^\diamond\}.$$
\end{definition}

Evidently, $\lambda^\diamond\bigcap\lambda^\square=0_Y$.

We can express any fuzzy set $\lambda$ in terms of the operators $\lambda^\diamond$ and $\lambda^\square$ using the result that follows.
\begin{theorem}\label{c1}
For any $\lambda\in\mathbb{I}^Y$, $\lambda=\lambda^\square\cup(\lambda\cap\lambda^\diamond)$.
\end{theorem}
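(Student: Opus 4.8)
The plan is to prove the identity as an equality of fuzzy sets by comparing the fuzzy points that each side contains, relying on the standard facts that a fuzzy point satisfies $y_t\in\sigma$ iff $t\le\sigma(y)$, that $y_t\in A\cup B$ iff $y_t\in A$ or $y_t\in B$ (since membership in a union is governed by the pointwise maximum), and that a fuzzy set is recovered from its fuzzy points through $\sigma(y)=\sup\{t\in(0,1]:y_t\in\sigma\}$. Consequently two fuzzy sets coincide exactly when they contain the same fuzzy points, and it suffices to verify the two inclusions $\lambda^\square\cup(\lambda\cap\lambda^\diamond)\subseteq\lambda$ and $\lambda\subseteq\lambda^\square\cup(\lambda\cap\lambda^\diamond)$.

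For the first inclusion I would argue that both constituents of the union lie inside $\lambda$. The inclusion $\lambda\cap\lambda^\diamond\subseteq\lambda$ is immediate. For $\lambda^\square\subseteq\lambda$, note that every fuzzy point in the defining family of $\lambda^\square$ already belongs to $\lambda$, so $\lambda^\square(y)=\sup\{t:y_t\in\lambda,\ y_t\notin\lambda^\diamond\}\le\sup\{t:y_t\in\lambda\}=\lambda(y)$ for every $y\in Y$ (with the empty supremum read as $0$). Since a union of two fuzzy sets each contained in $\lambda$ is again contained in $\lambda$, the first inclusion follows.

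For the reverse inclusion I would take an arbitrary fuzzy point $y_t\in\lambda$ and split on whether it belongs to $\lambda^\diamond$. If $y_t\in\lambda^\diamond$, then $y_t\in\lambda$ and $y_t\in\lambda^\diamond$, so $y_t\in\lambda\cap\lambda^\diamond$ and hence in the union. If $y_t\notin\lambda^\diamond$, then $y_t$ satisfies exactly the two conditions in Definition \ref{square}, so $y_t$ lies in the defining family of $\lambda^\square$ and therefore $y_t\in\lambda^\square$, again placing it in the union. In either case $y_t$ belongs to the right-hand side, which yields $\lambda\subseteq\lambda^\square\cup(\lambda\cap\lambda^\diamond)$ and completes the equality.

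The one point that genuinely needs care---and the step I expect to be the main obstacle---is that the family $\{y_t:y_t\in\lambda,\ y_t\notin\lambda^\diamond\}$ defining $\lambda^\square$ need not be closed under lowering the height $t$, so it is not literally the set of fuzzy points of a fuzzy set; one must pass to the induced fuzzy set via the supremum above. This is why I only use the \emph{forward} implication ``membership in the defining family $\Rightarrow$ membership in the fuzzy set $\lambda^\square$'' in the reverse inclusion, and never its (false) converse. A fully pointwise alternative, which sidesteps the fuzzy-point bookkeeping entirely, is to case-split on the sign of $\lambda(y)-\lambda^\diamond(y)$ for each $y\in Y$: when $\lambda^\diamond(y)<\lambda(y)$ the supremum defining $\lambda^\square(y)$ equals $\lambda(y)$, and when $\lambda^\diamond(y)\ge\lambda(y)$ one has $\lambda^\square(y)=0$ while $(\lambda\cap\lambda^\diamond)(y)=\lambda(y)$; in both cases the pointwise maximum of $\lambda^\square(y)$ and $(\lambda\cap\lambda^\diamond)(y)$ returns $\lambda(y)$, establishing the identity directly.
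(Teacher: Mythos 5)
Your proof is correct and takes essentially the same route as the paper's: both inclusions, with the reverse inclusion obtained by taking a fuzzy point $y_t\in\lambda$ and splitting on whether $y_t\in\lambda^\diamond$ (landing in $\lambda\cap\lambda^\diamond$) or not (landing in $\lambda^\square$). Your added observation that the defining family of $\lambda^\square$ is not downward closed in $t$, so one must pass to the induced fuzzy set via a supremum and use only the forward implication, is a careful refinement the paper leaves implicit, but it does not alter the argument.
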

\begin{proof}
Surely, by Definition \ref{square}, $\lambda^\square\subseteq\lambda$, and $\lambda\cap\lambda^\diamond\subseteq\lambda$. Therefore, $\lambda^\square\cup(\lambda\cap\lambda^\diamond)\subseteq\lambda$.

On the other hand, suppose $y_t\in\lambda$. If $y_t\in\lambda^\diamond$, then $y_t\in\lambda^\diamond\cap\lambda$. Differently, $y_t\in\lambda^\square$. Both cases imply $\lambda\subseteq\lambda^\square\cup(\lambda\cap\lambda^\diamond)$. Thus, $\lambda=\lambda^\square\cup(\lambda\cap\lambda^\diamond)$.
\end{proof}

\begin{theorem}\label{c2}
For any $\lambda\in\mathbb{I}^Y$, $\lambda^\square\cap(\lambda^\square)^\diamond=0_Y$.
\end{theorem}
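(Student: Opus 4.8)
The plan is to reduce the whole statement to two facts already available in the excerpt: the monotonicity of the diamond operator (Theorem \ref{thm1}(iv)) and the disjointness $\lambda^\diamond\cap\lambda^\square=0_Y$ recorded immediately after Definition \ref{square}. I do not expect to touch the {\L}ukasiewicz operations $\oplus$, $-$ or the q-neighborhood structure at all; the argument should be a short ``sandwich'' rather than a membership computation.

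First I would note that $\lambda^\square\subseteq\lambda$ straight from Definition \ref{square}, since $\lambda^\square$ consists precisely of fuzzy points $y_t$ with $y_t\in\lambda$ (this inclusion is exactly what was used at the start of the proof of Theorem \ref{c1}). Next, applying Theorem \ref{thm1}(iv) to the inclusion $\lambda^\square\subseteq\lambda$ gives $(\lambda^\square)^\diamond\subseteq\lambda^\diamond$. These two steps are the entire engine of the proof.

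I would then intersect: combining $\lambda^\square\subseteq\lambda^\square$ with $(\lambda^\square)^\diamond\subseteq\lambda^\diamond$ yields
\[
\lambda^\square\cap(\lambda^\square)^\diamond\ \subseteq\ \lambda^\square\cap\lambda^\diamond\ =\ 0_Y,
\]
where the final equality is the noted disjointness $\lambda^\diamond\cap\lambda^\square=0_Y$. Since $0_Y$ is contained in every fuzzy set, the reverse inclusion is automatic, so equality holds and the claim follows.

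The proof has essentially no obstacle; the only point that requires a moment's care is the fuzzy-point/fuzzy-set identification, namely confirming that $\lambda^\square$, regarded as a fuzzy set (the supremum of the fuzzy points defining it), is genuinely a subfuzzyset of $\lambda$ so that the monotonicity hypothesis of Theorem \ref{thm1}(iv) is legitimately satisfied. Once that inclusion is granted, the result is a one-line consequence of monotonicity and disjointness, with no case analysis and no estimates on the operators involved.
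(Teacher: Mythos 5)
Your proof is correct and is in substance the same as the paper's: the paper argues by contradiction, taking $y_t\in\lambda^\square\cap(\lambda^\square)^\diamond$ and re-deriving the monotonicity step $(\lambda^\square)^\diamond\subseteq\lambda^\diamond$ inline (from $\lambda^\square\subseteq\lambda$, so $\bar{\lambda}\subseteq\overline{\lambda^\square}$, and downward closure of $\mathcal{P}$) before contradicting $y_t\notin\lambda^\diamond$ --- exactly your combination of Theorem \ref{thm1}(iv) with the disjointness $\lambda^\square\cap\lambda^\diamond=0_Y$ recorded after Definition \ref{square}. You have merely repackaged the same two ingredients as a direct inclusion instead of a contradiction.
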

\begin{proof}
Suppose otherwise that there is $y_t\in\F_t(Y)$ such that $y_t\in\lambda^\square\cap(\lambda^\square)^\diamond$. Then $y_t\in\lambda^\square$ and $y_t\in(\lambda^\square)^\diamond$. From the fist statement, we have $y_t\in\lambda$ and $y_t\notin\lambda^\diamond$. Form $y_t\in(\lambda^\square)^\diamond$, we have $\overline{\lambda^\square}\oplus\bar{\mu}\in\P$ for each $\mu\in\Q(y_t)$. Since $\lambda^\square\subseteq\lambda$, so $\bar{\lambda}\subseteq\overline{\lambda^\square}$ implies $\bar{\lambda}\oplus\bar{\mu}\in\P$ for each $\mu\in\Q(y_t)$. This means that $y_t\in\lambda^\diamond$, a contradiction to the consequence of the first statement. Thus, we must have $\lambda^\square\cap(\lambda^\square)^\diamond=0_Y$.
\end{proof}

\begin{theorem}\label{c3}
For any PFTS $(Y,\T,\mathcal{P})$, the following properties are equivalent:
\begin{enumerate}[(i)]
\item $\T\approx\P$.
\item For any $\lambda\in\mathbb{I}^Y$ with $\lambda\cap\lambda^\diamond=0_Y$ implies $\bar{\lambda}\notin\P$.
\item For any $\lambda\in\mathbb{I}^Y$, $\overline{(\lambda^\square)}\notin\P$.
\item For any $\lambda\in\mathbb{I}^Y$, if $\lambda$ includes no $\sigma$ such that $0_Y\neq\sigma\subseteq\sigma^\diamond$ implies $\bar{\lambda}\notin\P$.
\item For each fuzzy $\T^\diamond$-closed set $\lambda$ in $Y$, $\overline{(\lambda^\square)}\notin\P$.
\end{enumerate}
\end{theorem}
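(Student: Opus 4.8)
The plan is to establish the equivalences by the links (i) $\Leftrightarrow$ (ii), (ii) $\Leftrightarrow$ (iii), (iii) $\Leftrightarrow$ (v), and then the loop (iii) $\Rightarrow$ (iv) $\Rightarrow$ (ii), so that all five statements collapse into one class. The link (i) $\Leftrightarrow$ (ii) should be a direct translation of the definition of compatibility: the hypothesis ``for each $y_t\in\lambda$ there is $\mu\in\Q(y_t)$ with $\bar{\lambda}\oplus\bar{\mu}\notin\P$'' is, point by point, the negation of the membership condition defining $\lambda^\diamond$, and a short fuzzy-point argument shows this holds exactly when $\lambda\cap\lambda^\diamond=0_Y$; thus (i) reads precisely as (ii). For (ii) $\Rightarrow$ (iii) I would apply (ii) to the fuzzy set $\lambda^\square$, since Theorem \ref{c2} gives $\lambda^\square\cap(\lambda^\square)^\diamond=0_Y$ and hence $\overline{(\lambda^\square)}\notin\P$. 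The converse (iii) $\Rightarrow$ (ii) is immediate: when $\lambda\cap\lambda^\diamond=0_Y$ every $y_t\in\lambda$ fails to lie in $\lambda^\diamond$, so $\lambda^\square=\lambda$ and $\overline{(\lambda^\square)}=\bar{\lambda}\notin\P$.

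For (iii) $\Leftrightarrow$ (v), one direction is trivial because (v) is just (iii) restricted to $\T^\diamond$-closed sets. For (v) $\Rightarrow$ (iii) I would pass from an arbitrary $\mu$ to the $\T^\diamond$-closed set $Cl^\diamond(\mu)=\mu\cup\mu^\diamond$ and use the identity $(Cl^\diamond(\mu))^\square=\mu^\square$. This identity follows from $(\mu\cup\mu^\diamond)^\diamond=\mu^\diamond\cup(\mu^\diamond)^\diamond=\mu^\diamond$ (Theorem \ref{thm1}(v),(vi)): the fuzzy points of $\mu\cup\mu^\diamond$ that avoid its $\diamond$ are exactly those of $\mu$ that avoid $\mu^\diamond$. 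Applying (v) to the closed set $Cl^\diamond(\mu)$ then yields $\overline{(\mu^\square)}\notin\P$, which is (iii).

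The crux, and the step I expect to be the main obstacle, is (iii) $\Rightarrow$ (iv), resting on the lemma that under (iii) the set $\omega:=\lambda\cap\lambda^\diamond$ is $\diamond$-dense in itself, i.e. $\omega\subseteq\omega^\diamond$. To prove the lemma, I would fix $y_t\in\omega$ and $\nu\in\Q(y_t)$; since $y_t\in\lambda^\diamond$ we have $\bar{\lambda}\oplus\bar{\nu}\in\P$. Writing $\lambda=\lambda^\square\cup\omega$ (Theorem \ref{c1}), so that $\bar{\lambda}=\overline{(\lambda^\square)}\cap\bar{\omega}$, Remark \ref{op-remark}(ii) gives
\[
\big(\overline{(\lambda^\square)}\oplus\bar{\nu}\big)\cap\big(\bar{\omega}\oplus\bar{\nu}\big)\subseteq\big(\overline{(\lambda^\square)}\cap\bar{\omega}\big)\oplus\bar{\nu}=\bar{\lambda}\oplus\bar{\nu}\in\P,
\]
so the left-hand intersection lies in $\P$. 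The primal axiom (Definition \ref{primaldefinition}(iii)) then forces one of the two factors into $\P$; but $\overline{(\lambda^\square)}\subseteq\overline{(\lambda^\square)}\oplus\bar{\nu}$ together with (iii) and Theorem \ref{pp1}(ii) eliminates the first factor, leaving $\bar{\omega}\oplus\bar{\nu}\in\P$. As $\nu\in\Q(y_t)$ was arbitrary, $y_t\in\omega^\diamond$, proving the lemma. With it, (iii) $\Rightarrow$ (iv) is quick: if $\lambda$ contains no nonzero $\sigma$ with $\sigma\subseteq\sigma^\diamond$, then the $\diamond$-dense-in-itself set $\omega=\lambda\cap\lambda^\diamond\subseteq\lambda$ must be $0_Y$, so $\lambda=\lambda^\square$ and $\bar{\lambda}=\overline{(\lambda^\square)}\notin\P$ by (iii). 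Finally, (iv) $\Rightarrow$ (ii) is the easy side of a strength comparison: if $\lambda\cap\lambda^\diamond=0_Y$ then, by monotonicity of $(\cdot)^\diamond$ (Theorem \ref{thm1}(iv)), any $\sigma\subseteq\lambda$ with $\sigma\subseteq\sigma^\diamond$ satisfies $\sigma\subseteq\lambda\cap\lambda^\diamond=0_Y$, so $\lambda$ has no nonzero such $\sigma$ and (iv) delivers $\bar{\lambda}\notin\P$. The delicate point throughout is the lemma, which replaces the naive (and here unavailable) transfinite Cantor--Bendixson iteration by the single-step observation that $\lambda\cap\lambda^\diamond$ is automatically $\diamond$-perfect once the ``scattered part'' $\lambda^\square$ is known to be $\P$-small.
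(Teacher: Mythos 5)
Your proposal is correct, and it takes a genuinely different route: the paper proves the single cycle (i)$\Rightarrow$(ii)$\Rightarrow$(iii)$\Rightarrow$(iv)$\Rightarrow$(v)$\Rightarrow$(i), whereas you build a web of shorter links, and three of them diverge substantively. First, you close (i)$\Leftrightarrow$(ii) directly, observing that the compatibility hypothesis is the pointwise negation of membership in $\lambda^\diamond$; the paper only proves (i)$\Rightarrow$(ii) this way and recovers the converse at the far end of the cycle via the elaborate (v)$\Rightarrow$(i). Your shortcut is sound, though the ``short fuzzy-point argument'' identifying ``every $y_t\in\lambda$ avoids $\lambda^\diamond$'' with $\lambda\cap\lambda^\diamond=0_Y$ silently uses that $y_t\in\lambda^\diamond$ iff $t\leq\lambda^\diamond(y)$, which needs $\Q(y_s)\subseteq\Q(y_t)$ for $s\leq t$ and attainment of the supremum; the paper glosses over the same point. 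Second, your (v)$\Rightarrow$(iii) via the identity $(Cl^\diamond(\mu))^\square=\mu^\square$ is exactly the construction buried inside the paper's (v)$\Rightarrow$(i) (there $\mu=\lambda\cup\lambda^\diamond$ with $\mu^\square=\lambda$), but extracting it as a general identity, justified by $(\mu\cup\mu^\diamond)^\diamond=\mu^\diamond$ from Theorem \ref{thm1} (v),(vi), makes the step cleaner and reusable. Third, your key lemma --- that under (iii) the set $\omega=\lambda\cap\lambda^\diamond$ satisfies $\omega\subseteq\omega^\diamond$ --- is precisely what the paper needs in (iii)$\Rightarrow$(iv), where it is obtained in one line: $\lambda^\diamond=(\lambda^\square)^\diamond\cup(\lambda\cap\lambda^\diamond)^\diamond=(\lambda\cap\lambda^\diamond)^\diamond$ by Theorem \ref{thm1} (iii),(vi), whence $\omega\subseteq\lambda^\diamond=\omega^\diamond$. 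Your hands-on derivation via Remark \ref{op-remark} (ii), heredity, the splitting axiom of Definition \ref{primaldefinition}, and Theorem \ref{pp1} (ii) is correct, but it essentially re-proves the additivity Theorem \ref{thm1} (vi) at the level of points and could be replaced by that citation. Your remaining links --- (ii)$\Rightarrow$(iii) by invoking Theorem \ref{c2} directly (the paper re-derives $(\lambda^\square)^\diamond=0_Y$ by hand), and (iv)$\Rightarrow$(ii) by monotonicity --- are both valid and marginally shorter. Net comparison: the paper's cycle is more economical, routing everything through one chain; your organization isolates reusable facts (the $Cl^\diamond$--square identity and the $\diamond$-density of $\lambda\cap\lambda^\diamond$) and avoids proving the hardest return implication through all five statements.
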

\begin{proof}
(i)$\Rightarrow$(ii) Given $\lambda\in\mathbb{I}^Y$ for which $\lambda\cap\lambda^\diamond=0_Y$. Therefore, if $y_t\in\lambda$, $y_t\notin\lambda^\diamond$ implies there exists $\nu\in\Q(y_t)$ such that $\bar{\lambda}\oplus\bar{\nu}\notin\P$. Since $\T\approx\P$, then $\bar{\lambda}\notin\P$.\\
	
(ii)$\Rightarrow$(iii) If $y_t\in\lambda^\square$, then $y_t\in\lambda$ but $y_t\notin\lambda^\diamond$. Since $\lambda^\square\subseteq\lambda$, by Theorem \ref{thm1} (iv), $y_t\notin(\lambda^\square)^\diamond$. We claim that $(\lambda^\square)^\diamond=0$. Suppose, if possible, $(\lambda^\square)^\diamond(y)=r$ for some $r\in(0,1]$, then $r<t$ and so $y_r\in(\lambda^\square)^\diamond$. Since $(\lambda^\square)^\diamond\subseteq\lambda^\diamond$, therefore, $y_r\in\lambda^\diamond$. But, since $r<t$ and $y_t\in\lambda^\square$, then $y_r\in\lambda^\square$ and hence, $y_r\notin\lambda^\diamond$, a contradiction. Thus, $(\lambda^\square)^\diamond(y)=0$ for each $y$. This implies that $\lambda^\square\cap(\lambda^\square)^\diamond=0_Y$ for each $\lambda\in\mathbb{I}^Y$. By (ii), $\overline{\lambda^\square}\notin\P$.\\

(iii)$\Rightarrow$(iv) Let $\lambda$ be a fuzzy set that possesses the property in (iv). By Theorem \ref{c1}, $\lambda=\lambda^\square\cup(\lambda\cap\lambda^\diamond)$. By Theorem \ref{thm1} (vi), $\lambda^\diamond=(\lambda^\square)^\diamond\cup(\lambda\cap\lambda^\diamond)^\diamond$. Since $\overline{\lambda^\square}\notin\P$ (by (iii)), by Theorem \ref{thm1}, $(\lambda^\square)^\diamond=0_Y$. Therefore, $\lambda^\diamond=(\lambda\cap\lambda^\diamond)^\diamond$. Since $\lambda\cap\lambda^\diamond\subseteq\lambda^\diamond$, so $\lambda\cap\lambda^\diamond\subseteq(\lambda\cap\lambda^\diamond)^\diamond$. By assumption, $(\lambda\cap\lambda^\diamond)^\diamond=0_Y$ implies $\lambda\cap\lambda^\diamond=0_Y$. This shows that $\lambda=\lambda^\square$ and thus, $\bar{\lambda}=\overline{\lambda^\square}\notin\P$.\\

(iv)$\Rightarrow$(v) Let $\lambda\in(\T^\diamond)^c$. We know that $\lambda^\square=\{y_t\in\F_t(Y): y_t\in\lambda \text{ and }y_t\notin\lambda^\diamond\}$. Since $\lambda\in(\T^\diamond)^c$, so $\lambda^\diamond\subseteq\lambda$. If $\lambda$ does not contain any non-zero fuzzy set $\sigma$ with $\sigma\subseteq\sigma^\diamond$ implies that $\lambda\cap\lambda^\diamond=0_Y$. Thus, $\lambda=\lambda^\square$ and so, by (iv), $\overline{\lambda^\square}=\bar{\lambda}\notin\P$.\\

(v)$\Rightarrow$(i) Let $\lambda\in\mathbb{I}^Y$ such that for each $y_t\in\lambda$, there exists $\nu\in\Q(y_t)$ such that $\bar{\lambda}\oplus\bar{\nu}\notin\P$. Therefore, $y_t\notin\lambda^\diamond$. Set $\mu=\lambda\cup\lambda^\diamond$. Then $\mu^\diamond=(\lambda\cup\lambda^\diamond)^\diamond=\lambda^\diamond\cup(\lambda^\diamond)^\diamond=\lambda^\diamond$ (by Theorem \ref{thm1} (v)). Thus, $Cl^\diamond(\mu)=\mu\cup\mu^\diamond=(\lambda\cup\lambda^\diamond)\cup\lambda^\diamond=\mu$. This shows that $\mu\in(\T^\diamond)^c$. By (v), $\overline{\mu^\square}\notin\P\ \cdots(\blacktriangleleft)$. If $y_t\in\mu^\square$, then $y_t\in\mu$ but $y_t\notin\mu^\diamond=\lambda^\diamond$. Since $\mu=\lambda\cup\lambda^\diamond$, so $y_t\in\lambda$. Thus, $\mu^\square\subseteq\lambda$. By assumption, we have $y_t\in\lambda$ but $y_t\notin\lambda^\diamond=\mu^\diamond$. This implies that $y_t\in\mu$ but $y_t\notin\mu^\diamond$. Therefore, $y_t\in\mu^\square$ and hence, $\mu^\square=\lambda$. By ($\blacktriangleleft$), $\overline\lambda=\overline{\mu^\square}\notin\P$. This proves that $\T\approx\P$.
\end{proof}

\begin{theorem}\label{c4}
For any PFTS $(Y,\T,\mathcal{P})$, the following properties are equivalent and implied by $\T\approx\P$.
\begin{enumerate}[(i)]
\item For any $\lambda\in\mathbb{I}^Y$ with $\lambda\cap\lambda^\diamond=0_Y$ implies $\lambda^\diamond=0_Y$.

\item For any $\lambda\in\mathbb{I}^Y$, $(\lambda^\square)^\diamond=0_Y$.

\item For any $\lambda\in\mathbb{I}^Y$, $(\lambda\cap\lambda^\diamond)^\diamond=\lambda^\diamond$.
\end{enumerate}
\end{theorem}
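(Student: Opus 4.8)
The plan is to prove the three conditions equivalent by establishing the cycle (i)$\Rightarrow$(ii)$\Rightarrow$(iii)$\Rightarrow$(i), and then to show separately that compatibility $\T\approx\P$ forces (i); since the three will have been shown equivalent, this single implication suffices to prove that $\T\approx\P$ implies all of them. The whole argument should be short, because almost every step reduces to an already-proven identity about $(\cdot)^\diamond$, $\lambda^\square$, and the decomposition of $\lambda$.

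For the implication $\T\approx\P\Rightarrow$(i), I would take $\lambda$ with $\lambda\cap\lambda^\diamond=0_Y$ and chain two earlier results: the equivalence (i)$\Leftrightarrow$(ii) of Theorem \ref{c3} turns compatibility together with $\lambda\cap\lambda^\diamond=0_Y$ into the conclusion $\bar{\lambda}\notin\P$, and then Theorem \ref{thm1}(iii) immediately upgrades $\bar{\lambda}\notin\P$ to $\lambda^\diamond=0_Y$, which is exactly (i). For (i)$\Rightarrow$(ii), the key observation is to apply (i) not to $\lambda$ but to the auxiliary set $\lambda^\square$: Theorem \ref{c2} already guarantees $\lambda^\square\cap(\lambda^\square)^\diamond=0_Y$, so $\lambda^\square$ satisfies the hypothesis of (i), and (i) then yields $(\lambda^\square)^\diamond=0_Y$, which is (ii).

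The remaining two implications are bookkeeping. For (ii)$\Rightarrow$(iii), I would start from the decomposition $\lambda=\lambda^\square\cup(\lambda\cap\lambda^\diamond)$ of Theorem \ref{c1}, apply $(\cdot)^\diamond$, and use the union-additivity of Theorem \ref{thm1}(vi) to obtain the \emph{exact} equality $\lambda^\diamond=(\lambda^\square)^\diamond\cup(\lambda\cap\lambda^\diamond)^\diamond$; condition (ii) annihilates the first summand and leaves $\lambda^\diamond=(\lambda\cap\lambda^\diamond)^\diamond$, i.e.\ (iii). Finally, (iii)$\Rightarrow$(i) is a one-line substitution: if $\lambda\cap\lambda^\diamond=0_Y$, then (iii) gives $\lambda^\diamond=(\lambda\cap\lambda^\diamond)^\diamond=0_Y^\diamond=0_Y$ by Theorem \ref{thm1}(i).

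I do not expect a serious obstacle here; the one place that needs a genuine idea rather than routine manipulation is (i)$\Rightarrow$(ii), where one must think to feed $\lambda^\square$ (rather than $\lambda$) into hypothesis (i) and invoke Theorem \ref{c2}. The only point I would verify with care is that Theorem \ref{thm1}(vi) really provides an equality for the union, since the step (ii)$\Rightarrow$(iii) relies on the decomposition of $\lambda^\diamond$ being exact and not merely an inclusion.
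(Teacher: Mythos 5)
Your proposal is correct and matches the paper's (very terse) proof, which simply says the result ``follows from Theorems \ref{thm1}, \ref{c1}, and \ref{c3}'': your cycle (i)$\Rightarrow$(ii)$\Rightarrow$(iii)$\Rightarrow$(i) plus the single implication $\T\approx\P\Rightarrow$(i) via Theorem \ref{c3}(i)$\Leftrightarrow$(ii) and Theorem \ref{thm1}(iii) is exactly the intended expansion, with Theorem \ref{c2} correctly supplying the hypothesis needed to feed $\lambda^\square$ into (i). Your cautionary check is also well placed: Theorem \ref{thm1}(vi) does give the exact equality $(\lambda\cup\mu)^\diamond=\lambda^\diamond\cup\mu^\diamond$, so the step (ii)$\Rightarrow$(iii) is sound.
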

\begin{proof}
It follows from Theorems \ref{thm1}, \ref{c1}, and \ref{c3}.
\end{proof}

\begin{theorem}\label{c5}
Let $(Y,\T,\mathcal{P})$ be a PFTS. If $\T\approx\P$, then $(\lambda^\diamond)^\diamond=\lambda^\diamond$ for each $\lambda\in\mathbb{I}^Y$.
\end{theorem}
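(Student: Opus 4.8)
The plan is to establish the two inclusions separately, observing that one of them is already available for free. The inclusion $(\lambda^\diamond)^\diamond\subseteq\lambda^\diamond$ holds with no hypothesis on $\mathcal{P}$ whatsoever: it is precisely Theorem \ref{thm1}(v). Consequently the entire content of the statement rests on the reverse inclusion $\lambda^\diamond\subseteq(\lambda^\diamond)^\diamond$, and this is the only place where the compatibility $\T\approx\P$ should have to intervene. I would flag this explicitly so the reader sees exactly where the hypothesis is and is not needed.

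For the reverse inclusion, the plan is to route the compatibility hypothesis through Theorem \ref{c4}, whose condition (iii) asserts that $(\lambda\cap\lambda^\diamond)^\diamond=\lambda^\diamond$ holds for every $\lambda\in\mathbb{I}^Y$ once $\T\approx\P$. The key move is to exploit the trivial containment $\lambda\cap\lambda^\diamond\subseteq\lambda^\diamond$ and push it through the operator $(\cdot)^\diamond$: by the monotonicity recorded in Theorem \ref{thm1}(iv), applying $(\cdot)^\diamond$ to both sides yields $(\lambda\cap\lambda^\diamond)^\diamond\subseteq(\lambda^\diamond)^\diamond$. Substituting the identity from Theorem \ref{c4}(iii) into the left-hand side then gives $\lambda^\diamond\subseteq(\lambda^\diamond)^\diamond$ directly.

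Combining the two inclusions produces $(\lambda^\diamond)^\diamond=\lambda^\diamond$, as required. I do not anticipate a genuine obstacle here: the only real subtlety is recognizing that compatibility should be invoked in its already-packaged form (Theorem \ref{c4}(iii)) rather than unwound from the definition of $\approx$, after which monotonicity does all the remaining work in a single line. An alternative, slightly longer route would go through the decomposition $\lambda^\diamond=(\lambda^\diamond)^\square\cup\big((\lambda^\diamond)\cap(\lambda^\diamond)^\diamond\big)$ of Theorem \ref{c1} together with $(\lambda^\diamond)^\square$ being annihilated by $(\cdot)^\diamond$ under compatibility (Theorem \ref{c4}(ii)); but since that approach ultimately reproves the same fact with more bookkeeping, I would present the short monotonicity argument and keep the decomposition argument only as a remark if space permits.
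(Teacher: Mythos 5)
Your proposal is correct and follows essentially the same route as the paper's own proof: both reduce everything to the key identity $(\lambda\cap\lambda^\diamond)^\diamond=\lambda^\diamond$ from Theorem \ref{c4}(iii), combined with Theorem \ref{thm1}(v) for the easy inclusion. The only cosmetic difference is that the paper passes through Theorem \ref{thm1}(vii) to get $(\lambda\cap\lambda^\diamond)^\diamond\subseteq\lambda^\diamond\cap(\lambda^\diamond)^\diamond\subseteq(\lambda^\diamond)^\diamond$, whereas you apply monotonicity, Theorem \ref{thm1}(iv), directly to $\lambda\cap\lambda^\diamond\subseteq\lambda^\diamond$ --- an equivalent step, since (vii) is itself a consequence of (iv).
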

\begin{proof}
Given $\lambda\in\mathbb{I}^Y$. The part $(\lambda^\diamond)^\diamond\subseteq\lambda^\diamond$ follows from Theorem \ref{thm1} (v).

On the other hand, by Theorem \ref{c4} (iii), $\lambda^\diamond=(\lambda\cap\lambda^\diamond)^\diamond$. By Theorem \ref{thm1} (vii), $\lambda^\diamond=(\lambda\cap\lambda^\diamond)^\diamond\subseteq\lambda^\diamond\cap(\lambda^\diamond)^\diamond\subseteq(\lambda^\diamond)^\diamond$ and so $\lambda^\diamond\subseteq(\lambda^\diamond)^\diamond$. Thus, $\lambda^\diamond=(\lambda^\diamond)^\diamond$.
\end{proof}

\begin{theorem}\label{closed=union}
Let $(Y,\T,\mathcal{P})$ be a PFTS such that $\T\approx\P$. A set $\lambda\in\mathbb{I}^Y$ is fuzzy $\T^\diamond$-closed iff it can be written as a union of a $\T$-closed set and a fuzzy set whose complement is not in $\P$. 
\end{theorem}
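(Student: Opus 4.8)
The plan is to first reduce the notion of fuzzy $\T^\diamond$-closedness to a single inclusion. Since $\bar\lambda\in\T^\diamond$ exactly when $Cl^\diamond(\overline{\bar\lambda})=\overline{\bar\lambda}$, a set $\lambda$ is fuzzy $\T^\diamond$-closed precisely when $Cl^\diamond(\lambda)=\lambda$, i.e.\ when $\lambda\cup\lambda^\diamond=\lambda$, i.e.\ when $\lambda^\diamond\subseteq\lambda$. With this reformulation in hand, both implications become a recombination of the properties of $(\cdot)^\diamond$ gathered in Theorem \ref{thm1}, together with the compatibility characterization of Theorem \ref{c3}.

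For the ``only if'' direction I would start from a fuzzy $\T^\diamond$-closed $\lambda$, so that $\lambda^\diamond\subseteq\lambda$ and hence $\lambda\cap\lambda^\diamond=\lambda^\diamond$. Feeding this into the decomposition of Theorem \ref{c1} gives $\lambda=\lambda^\square\cup(\lambda\cap\lambda^\diamond)=\lambda^\square\cup\lambda^\diamond$, so it remains only to identify the two pieces. The fuzzy set $\lambda^\diamond$ serves as the $\T$-closed piece: by Theorem \ref{thm1} (ii), $\lambda^\diamond=Cl(\lambda^\diamond)$, so $\lambda^\diamond$ is $\T$-closed. The piece $\lambda^\square$ is where the hypothesis $\T\approx\P$ is used in an essential way; by the equivalence (i)$\Leftrightarrow$(iii) of Theorem \ref{c3}, we have $\overline{(\lambda^\square)}\notin\P$. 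Thus $\lambda=\lambda^\diamond\cup\lambda^\square$ exhibits $\lambda$ as the union of a $\T$-closed set and a fuzzy set whose complement is not in $\P$, as required.

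For the ``if'' direction, suppose $\lambda=F\cup\eta$ with $F$ fuzzy $\T$-closed (so $Cl(F)=F$) and $\bar{\eta}\notin\P$. I would compute $\lambda^\diamond$ using additivity over unions, Theorem \ref{thm1} (vi), to get $\lambda^\diamond=F^\diamond\cup\eta^\diamond$. Since $\bar{\eta}\notin\P$, Theorem \ref{thm1} (iii) forces $\eta^\diamond=0_Y$, while Theorem \ref{thm1} (ii) yields $F^\diamond\subseteq Cl(F)=F$. Combining, $\lambda^\diamond=F^\diamond\subseteq F\subseteq\lambda$, so $Cl^\diamond(\lambda)=\lambda\cup\lambda^\diamond=\lambda$ and $\lambda$ is fuzzy $\T^\diamond$-closed. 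It is worth noting that this direction does not invoke compatibility at all.

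Since the whole argument recombines statements already proved, I do not expect any serious computational obstacle. The one step that carries genuine content is the verification $\overline{(\lambda^\square)}\notin\P$ for the second piece of the decomposition, and this is exactly where the compatibility $\T\approx\P$ enters through Theorem \ref{c3}. I would therefore present that appeal as the crux of the forward implication and keep the rest as short bookkeeping with Theorem \ref{thm1} and Theorem \ref{c1}.
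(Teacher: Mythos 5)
Your proposal is correct and follows essentially the same route as the paper's own proof: the forward direction uses the decomposition $\lambda=\lambda^\square\cup(\lambda\cap\lambda^\diamond)$ of Theorem \ref{c1} together with $\lambda^\diamond=Cl(\lambda^\diamond)$ from Theorem \ref{thm1} (ii) and the compatibility characterization $\overline{\lambda^\square}\notin\P$ from Theorem \ref{c3} (iii), while the converse uses Theorem \ref{thm1} (iii) and (vi) exactly as in the paper. Your explicit observation that compatibility is not needed for the ``if'' direction is a small but accurate clarification that the paper leaves implicit.
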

\begin{proof}
Let $\lambda\in\mathbb{I}^Y$. If $\lambda$ is fuzzy $\T^\diamond$-closed, then $\lambda^\diamond\subseteq\lambda$. By Theorem \ref{c1}, $\lambda=\lambda^\square\cup\lambda^\diamond$. By Theorem \ref{thm1} (ii), $\lambda^\diamond$ is $\T$-closed in $Y$. Since $\T\approx\P$, by Theorem \ref{c3} (iii), $\overline{\lambda^\square}\notin\P$.

On the other hand, suppose that $\lambda=\mu\cup\eta$ for some $\mu\in\T^c$ and a fuzzy set $\eta$ such that $\bar{\eta}\notin\P$. It suffices to show that $\lambda^\diamond\subseteq\lambda$. Now, $\lambda^\diamond=(\mu\cup\eta)^\diamond=\mu^\diamond\cup\eta^\diamond$. Since $\bar{\eta}\notin\P$, by Theorem \ref{thm1} (iii), $\eta^\diamond=0_Y$. Therefore, by fuzzy closedness of $\mu$ and Theorem \ref{thm1} (ii), we have $\lambda^\diamond=\mu^\diamond\subseteq Cl(\mu)=\mu\subseteq\lambda$. Hence, $\lambda$ is fuzzy $\T^\diamond$-closed.
\end{proof}

\begin{theorem}\label{base=top}
Let $(Y,\T,\mathcal{P})$ be a PFTS such that $\T\approx\P$. Then $\T^\diamond=\big\{\mu-\bar{\lambda}: \mu\in \T\text{ and }\lambda\not\in\mathcal{P}\big\}$ 
\end{theorem}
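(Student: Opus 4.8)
The plan is to prove the two inclusions separately and to notice that only one of them uses compatibility. Writing $\mathcal{B}_\P=\big\{\mu-\bar{\lambda}:\mu\in\T,\ \lambda\notin\mathcal{P}\big\}$, the inclusion $\mathcal{B}_\P\subseteq\T^\diamond$ is already in hand: it is exactly the first assertion proved inside Theorem \ref{base}, and it holds with no hypothesis on $\P$. Hence the entire force of the theorem is the reverse inclusion $\T^\diamond\subseteq\mathcal{B}_\P$, i.e. that $\T\approx\P$ forces every $\T^\diamond$-open set to already be a basic open set (so that the base $\mathcal{B}_\P$ coincides with the topology it generates).

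For the reverse inclusion I would pass to complements and feed into Theorem \ref{closed=union}, which is where compatibility enters. Fix $\nu\in\T^\diamond$, so that $\bar{\nu}$ is fuzzy $\T^\diamond$-closed. By Theorem \ref{closed=union} we may write $\bar{\nu}=C\cup\eta$ with $C\in\T^c$ and $\bar{\eta}\notin\mathcal{P}$. Complementing and using the De Morgan law $\overline{C\cup\eta}=\bar{C}\cap\bar{\eta}$ gives $\nu=\mu\cap\lambda$, where $\mu:=\bar{C}\in\T$ and $\lambda:=\bar{\eta}\notin\mathcal{P}$. This reduces everything to a purely algebraic claim: every intersection $\mu\cap\lambda$ with $\mu\in\T$ and $\lambda\notin\mathcal{P}$ can be rewritten as $\mu-\bar{\lambda'}$ for some $\lambda'\notin\mathcal{P}$ (keeping the same open factor $\mu$).

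To settle the algebraic claim I would keep $\mu$ and set $\lambda':=\overline{\mu-\lambda}=1_Y-(\mu-\lambda)$, so that $\bar{\lambda'}=\mu-\lambda$ and therefore $\mu-\bar{\lambda'}=\mu-(\mu-\lambda)$. A pointwise verification using the {\L}ukasiewicz definitions of $+$ and $-$, splitting into the cases $\mu(y)\le\lambda(y)$ and $\mu(y)>\lambda(y)$, yields the identity $\mu-(\mu-\lambda)=\mu\cap\lambda=\nu$. The same case split shows $\lambda(y)\le\lambda'(y)$ for all $y$, i.e. $\lambda\subseteq\lambda'$; since $\lambda\notin\mathcal{P}$ and the complement of a fuzzy primal is upward closed by Theorem \ref{pp1}(ii), we get $\lambda'\notin\mathcal{P}$. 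Thus $\nu=\mu-\bar{\lambda'}\in\mathcal{B}_\P$, which finishes the reverse inclusion and hence the equality $\T^\diamond=\mathcal{B}_\P$.

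The step I expect to be the main obstacle is precisely this algebraic core: the identity $\mu-(\mu-\lambda)=\mu\cap\lambda$ and the inclusion $\lambda\subseteq\lambda'$ are routine only if one is careful that $-$ and $+$ are the \emph{truncated} {\L}ukasiewicz operations rather than ordinary arithmetic, and the whole argument succeeds exactly because the choice of $\lambda'$ is simultaneously large enough to remain outside $\mathcal{P}$ (via Theorem \ref{pp1}(ii)) and tight enough to reproduce $\mu\cap\lambda$ on the nose. Everything else is bookkeeping with complements and the already-proven Theorems \ref{base} and \ref{closed=union}.
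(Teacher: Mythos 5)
Your proof is correct, and while its skeleton matches the paper's (Theorem \ref{base} gives $\mathcal{B}_\P\subseteq\T^\diamond$ without compatibility; compatibility enters only through Theorem \ref{closed=union} for the reverse inclusion), the algebraic finish is genuinely different. The paper works on the closed side: for $\mu\in(\T^\diamond)^c$ it takes the \emph{specific} decomposition $\mu=\mu^\square\cup\mu^\diamond$ from Theorem \ref{c1}, uses the disjointness $\mu^\square\cap\mu^\diamond=0_Y$ to rewrite the union as the truncated sum $\mu^\square+\mu^\diamond$, and complements that sum directly into the difference form $\bar{\mu}=\overline{\mu^\diamond}-\overline{\mu^\square}$. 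You instead use only the \emph{statement} of Theorem \ref{closed=union}: complementing first gives $\nu=\mu\cap\lambda$ with $\mu\in\T$, $\lambda\notin\P$, and you then prove the pointwise identity $\mu-(\mu-\lambda)=\mu\cap\lambda$ and enlarge $\lambda$ to $\lambda'=\overline{\mu-\lambda}\supseteq\lambda$, which stays outside $\P$ by Theorem \ref{pp1}(ii); your case analysis of the truncated operations checks out in all cases, including the boundary cases $\lambda(y)=0$ and $\mu(y)=0$ where the truncation bites. What each approach buys: the paper's route is shorter given the $(\cdot)^\square$ machinery, but it silently leans on the disjointness of its particular decomposition --- the abstract statement of Theorem \ref{closed=union} only promises \emph{some} union $C\cup\eta$, not a disjoint one, so an argument quoting that theorem as a black box cannot convert $\cup$ to $+$; your route is insensitive to this and so is the one that actually closes the gap between the statement of \ref{closed=union} and the conclusion. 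Moreover, since $\mu-\bar{\lambda}$ is exactly the {\L}ukasiewicz conjunction $\mu\odot\lambda$, your algebraic lemma proves the slightly stronger and reusable fact that $\{\mu\cap\lambda:\mu\in\T,\ \lambda\notin\P\}=\mathcal{B}_\P$, i.e.\ the min-intersection and $\odot$ generate the same family whenever the non-members of $\P$ form an upward closed class.
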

\begin{proof}
Let $\mathcal{B}=\{\mu-\bar{\lambda}: \mu\in \T\text{ and }\lambda\not\in\mathcal{P}\}$. Theorem \ref{base} guarantees that $\mathcal{B}\subseteq\T^\diamond$.

On the other hand, if $\mu\in(\T^\diamond)^c$, then $\mu^\diamond\subseteq\mu$. By Theorem \ref{c1}, $\mu=\mu^\square\cup\mu^\diamond$. By Theorem \ref{closed=union}, $\mu^\diamond$ is fuzzy $\T$-closed and $\overline{\mu^\square}\notin\P$. Since $\mu^\square\cap\mu^\diamond=0_Y$, so $\mu=\mu^\square+\mu^\diamond$ and therefore, $\bar{\mu}=\overline{\mu^\square+\mu^\diamond}=(1-\mu^\diamond)-\mu^\square=\overline{\mu^\diamond}-\bar{\lambda}$, where $\overline{\mu^\diamond}\in\T$ and $\lambda=\overline{\mu^\square}\notin\P$. This implies that $\bar{\mu}\in\mathcal{B}$ and thus, $\T^\diamond\subseteq\mathcal{B}$. Consequently, $\T^\diamond=\mathcal{B}$.
\end{proof}

\section*{Conclusion and possible lines of future work}\
Chang [1] proposed a fuzzy topology on a universe, which is a modified version of classical topology. Lowen [2] expanded the definition of fuzzy topology by including all constant functions in addition to zero and unit functions.  These topological generalizations have developed into an intriguing field of study. Several methods for generating fuzzy topologies have been discussed in the literature. By investigating the concept of primal fuzzy topology, we have made a novel contribution to the field of fuzzy topology. This study is based on fuzzy primal, a supplementary concept of a fuzzy grill. Fuzzy primals can be thought of as a widening of fuzzy ideals. We have covered multiple fundamental operations on fuzzy primals. A primal fuzzy topological space combines a fuzzy topological space and a fuzzy primal. Then we have proposed a fuzzy operator, denoted by the symbol $(\cdot)^\diamond$, in regard to a fuzzy topological space. The fuzzy topological operator $(\cdot)^\diamond$ is used to define another operator called "$Cl^\diamond$." The characteristics of $Cl^\diamond$ have been discussed in detail. We have observed that $Cl^\diamond$ is consistent with all of Kuratowski's fuzzy closure operator's axioms, therefore it generates a fuzzy topology known as primal fuzzy topology. Some examples have been provided to demonstrate the fact that primal fuzzy topologies are natural (non-trivial) fuzzy topologies. We have shown that the base of primal fuzzy consists of fuzzy sets that are fuzzy open sets from the original fuzzy topology minus the complement of some element in a fuzzy primal. Consequently, a primal fuzzy topology can be viewed as a larger fuzzy topology than the original one. We defined the term as a fuzzy primal being compatible with a fuzzy topology and examined other analogous conditions. It is demonstrated that the fuzzy base which produces the primary fuzzy topology is also a fuzzy topology if a fuzzy primal and a fuzzy topology are compatible.

The findings in this article are preliminary, and additional studies could provide additional knowledge by investigating more aspects of primal fuzzy topology, such as primal fuzzy interior, primal fuzzy limit points, and so on. Separation axioms, compactness, and connectedness of primal fuzzy topologies are also possible research directions in this area.

\medskip
\noindent {\bf Funding:} This research has received no external funding.\\
{\bf Conflicts of interest:} The authors declare no conflicts of interest.\\
{\bf Availability of data and material:} No data were used to support this study. \\
{\bf Code availability:} Not applicable.\\

%%%%%%%%%%%%%%%%%%%%%%%%%5 BiB  %%%%%%%%%%%%%%%%%%5
%\bibliographystyle{abbrv}
%\bibliography{primalfuzzy}

\end{document}